\theoremstyle{plain}
\newtheorem{thm}{Theorem}[section]
\newtheorem{lem}[thm]{Lemma}
\newtheorem{prop}[thm]{Proposition}
\theoremstyle{definition}
\newtheorem{de}[thm]{Definition}
\newtheorem{rem}[thm]{Remark}
\newtheorem{ex}[thm]{Example}
\newtheorem{ques}[thm]{Question}
\newcommand{\Z}{{\mathbb{Z}_+}}
\newcommand {\R} {\mathbb R}
\newcommand{\N}{\mathbb{N}}
\newcommand {\ol} {\overline}
\newcommand{\ep}{\varepsilon}
\newcommand{\lra}{\longrightarrow}
\begin{document}
\title[Recurrence properties and disjointness on the induced spaces] {Recurrence properties and disjointness on the induced spaces}
\author[J. Li]{Jie Li}
\thanks{The first and third authors are supported by NNSF of China (11371339). The second author
is partially supported by NNSF of China (11161029, 11171320) and NSF for Young Scholar of Guangxi Province (013GXNSFBA019020)}
\date{\today}
\address[J.~Li]{Jie Li, Wu Wen-Tsun Key Laboratory of Mathematics, USTC, Chinese Academy of Sciences and
School of Mathematics, University of Science and Technology of China,
Hefei, Anhui, 230026, P.R. China}
\email{jiel0516@mail.ustc.edu.cn}
\author[K. Yan]{Kesong Yan}
\address[K. Yan]{Wu Wen-Tsun Key Laboratory of Mathematics, USTC, Chinese Academy of Sciences and
School of Mathematics, University of Science and Technology of China,
Hefei, Anhui, 230026, P.R. China}
\address{Department of Mathematics and Computer Science, Liuzhou Teachers College,
Liuzhou 545004, Guangxi, P.R. China}
\email{ksyan@mail.ustc.edu.cn}
\author[X.~Ye]{Xiangdong Ye}
\address[X.~Ye]{Wu Wen-Tsun Key Laboratory of Mathematics, USTC, Chinese Academy of Sciences and
School of Mathematics, University of Science and Technology of China,
Hefei, Anhui, 230026, P.R. China}
\email{yexd@ustc.edu.cn}
\begin{abstract}

A topological dynamical system induces two natural systems, one is
on the hyperspace and the other one is on the space of probability measures. The
connection among some dynamical properties on the original space and
on the induced spaces are investigated. Particularly, a minimal
weakly mixing system which induces a $P$-system on the probability measures
space is constructed and some disjointness result is obtained.
\end{abstract}

\subjclass[2000]{Primary: 54B20; Secondary: 54H20.}
\keywords{Induced spaces, minimal, weakly mixing, disjointness}
\maketitle

\section{Introduction}
Throughout this paper, by a {\it topological dynamical system}
(t.d.s. for short) we mean a pair $(X, T)$, where $X$ is a compact
metric space with a metric $\rho$ and $T$ is a continuous surjective
map from $X$ to itself.  A non-empty closed invariant subset $Y
\subset X$ (i.e., $TY \subset Y$) defines naturally a subsystem $(Y,
T)$ of $(X, T)$.

\medskip

A t.d.s. $(X,T)$ induces two natural systems, one is $(K(X),T_K)$ on
the hyperspace $K(X)$ consisting of all non-empty closed subsets of
$X$ with the Hausdorff metric, and the other one is $(M(X),T_M)$ on
the probability measures space $M(X)$ consisting of all Borel probability
measures with the weak*-topology. Bauer and Sigmund \cite{BS75}
first gave a systematic investigation on the connection of dynamical
properties among $(X,T)$, $(K(X),T_K)$ and $(M(X),T_M)$. It was
proved that $(X,T)$ is weakly mixing (resp. mildly mixing, strongly
mixing) if and only if $(K(X),T_K)$ (or $(M(X),T_M)$) has the
same property. We remark that later it was shown that the
transitivity of $(K(X),T_K)$ (resp. $(M(X),T_M)$) is equivalent to
the weak mixing property of $(K(X),T_K)$ (resp. $(M(X),T_M)$), see
\cite{Ban05} and \cite{Shao}.

\medskip

Since then the connection of dynamical properties among $(X,T)$,
$(K(X),T_K)$ and $(M(X),T_M)$  has been studied by many authors,
see, e.g., \cite{Ban05, GW95, GKLOP09, KL1,  KL2,  Kom84,Li13,
Rom03}. A remarkable result by Glasner and Weiss \cite{GW95} stated
that the topological entropy of $(X,T)$ is zero if and only if so is
$(M(X), T_M)$  (similar results related to nullness and tameness can
be found in \cite{KL1, KL2}). Recently, Li \cite{Li13} observed that
$(K(X), T_K)$ is a $P$-system if and only if $(X,T)$ is a weakly
mixing system with dense small periodic sets (first defined by Huang
and Ye in \cite{HY05} and called an HY-system in \cite{Li11}).

\medskip

In this paper we further exploit the connection, and focus on
periodic systems, $P$-systems,  $M$-systems, $E$-systems and
disjointness. On the way to do this we define an almost HY-system
and show that $(M(X),T_M)$ is a $P$-system if and only if $(X,T)$ is
a weakly mixing almost-HY-system (Theorem \ref{thm:4-10}). A minimal
weakly mixing system which induces a $P$-system on $(M(X),T_M)$ is
constructed showing that HY-systems and almost HY-systems are
different property (Theorem \ref{thm:4-11}). We conjecture that
there is a weakly mixing proximal $E$-system inducing a $P$-system
on $(M(X),T_M)$ (this will be answered affirmatively in a forthcoming 
paper \cite{LSY13}). See the following two tables for the further
connection (see Section 3 and Section 4 for details).

\begin{table}[!h]
\caption{The connection with hyperspace} \vspace*{1.5pt}
\begin{center}
\begin{tabular}{| p{2.0cm}| p{3.3cm} | p{2.0cm} | p{2.0cm} |p{2.0cm}|p{2.0cm}|}
\hline
\multicolumn{1}{|c|} {$(X,T)$}  & \multicolumn{1}{|c|}
{periodic} & \multicolumn{1}{|c|} {HY-system} & \multicolumn{1}{|c|}
{w.m. $M$-system} & \multicolumn{1}{|c|} {w.m. $E$-system}  \\
\hline
\multicolumn{1}{|c|} {$(K(X),T_K)$}  & \multicolumn{1}{|c|}
{pointwise periodic} & \multicolumn{1}{|c|} {$P$-system} & \multicolumn{1}{|c|}
{$M$-system}& \multicolumn{1}{|c|} {$E$-system}  \\
\hline
\end{tabular}
\end{center}
\end{table}

\begin{table}[!h]
\caption{The connection with probability measures space} \vspace*{1.5pt}
\begin{center}
\begin{tabular}{| p{2.0cm}| p{3.3cm} | p{2.0cm} | p{2.0cm} |p{2.0cm}|p{2.0cm}|}
\hline \multicolumn{1}{|c|} {$(X,T)$} & \multicolumn{1}{|c|}
{periodic} & \multicolumn{1}{|c|} {almost HY-sys.} &
\multicolumn{1}{|c|} {not nece. $M$-sys.} &
\multicolumn{1}{|c|} {w.m. $E$-sys.}  \\
\hline
\multicolumn{1}{|c|} {$(M(X),T_M)$} & \multicolumn{1}{|c|} {p.w. periodic} &
\multicolumn{1}{|c|} {$P$-system} & \multicolumn{1}{|c|} {$M$-system} &
\multicolumn{1}{|c|} {$E$-system}  \\
\hline
\end{tabular}
\end{center}
\end{table}

\medskip

The notion of disjointness of two t.d.s. was introduced by
Furstenberg in his seminar paper \cite{Fur67}.  It is known if two
t.d.s. are disjoint then one of them is minimal. It is an open
question which system is disjoint from all minimal system. It was
shown that if a transitive t.d.s. is disjoint from all minimal
system then it is weakly mixing with dense minimal points
\cite{HY05}, and a weakly mixing system with dense distal points is
disjoint from all minimal systems \cite{DSY12, O10}. In this paper
we show that if $(K(X),T_K)$ is disjoint from all minimal system,
then so is $(X,T)$ (Theorem \ref{thm:5-2}). It seems that there are
examples $(X,T)$ which do not have dense distal points and at the
same time $(K(X),T_K)$ do. Unfortunately we could not provide one at
this moment.

\vspace{4mm}

\noindent {\bf Acknowledgments.} We thank W. Huang, Jian Li and S.
Shao for very useful discussions. Particularly, we thank B. Weiss
for his valuable suggestion related to Theorem \ref{thm:4-11}.

\section{Preliminary}

\subsection{Basic definitions and notations} In the article, the sets of integers, nonnegative integers
and natural numbers are denoted by $\mathbb{Z}$, $\mathbb{Z}_+$ and $\mathbb{N}$, respectively.

\medskip

A t.d.s. $(X, T)$ is {\it transitive} if for each pair non-empty
open subsets $U$ and $V$, $N(U, V)=\{n \in \mathbb{Z}_+: T^{-n}V
\cap U \neq \emptyset\}$ is infinite; it is {\it totally
transitive} if $(X, T^n)$ is transitive for each $n \in
\mathbb{N}$; and it is {\it weakly mixing} if $(X \times X, T \times
T)$ is transitive. We say that $x \in X$ is a {\it transitive point}
if its orbit $\mathrm{orb}(x, T)=\{x, Tx, T^2x, \ldots\}$ is dense
in $X$. The set of transitive points is denoted by
$\mathrm{Tran}_T$. It is well known that if $(X, T)$ is transitive,
then $\mathrm{Tran}_T$ is a dense $G_{\delta}$-set.

\medskip

A t.d.s. $(X, T)$ is {\it minimal} if $\mathrm{Tran}_T=X$, i.e., it
contains no proper subsystems. A point $x \in X$ is called a {\it
minimal point} or {\it almost periodic point} if
$(\overline{\mathrm{orb}(x, T)}, T)$ is a minimal subsystem of $(X,
T)$. We say that $x \in X$ is a {\it periodic point} if $T^nx=x$ for
some $n \in \mathbb{N}$. The set of all periodic points
(resp. minimal points) of $(X, T)$ is denoted by $P(T)$
(resp. $AP(T)$). A t.d.s. $(X, T)$ is called
\begin{itemize}
 \item a {\it $P$-system} if it is transitive and the set of periodic points is dense;

 \item an {\it $M$-system} if it is transitive and the set of minimal points is dense;

 \item an {\it $E$-system} if it is transitive and there is an invariant Borel probability measure $\mu$ with full support, i.e., $\mathrm{supp}(\mu)=X$.
\end{itemize}

Let $S$ be a subset of $\mathbb{Z}_+$. The {\it upper Banach density} and {\it upper density} of $S$ are defined by
$$BD^*(S)=\limsup_{|I| \rightarrow \infty} \frac{|S \cap I|}{|I|} \mbox{\ \ and \ } D^*(S)=\limsup_{n \rightarrow \infty} \frac{|S \cap [0, n-1]|}{n}.$$
where $I$ is taken over all non-empty finite intervals of
$\mathbb{Z}_+$ and $|\cdot|$ denote the cardinality of the set.

\medskip

A subset $S$ of $\mathbb{Z}_+$ is {\it syndetic} if it has a bounded
gap, i.e., there is $N \in \mathbb{N}$ such that $\{i, i+1, \ldots,
i+N\} \cap S \neq \emptyset$ for every $i \in \mathbb{Z}_+$; $S$ is
{\it thick} if it contains arbitrarily long runs of positive
integers, i.e., for every $n \in \mathbb{N}$ there exists some $a_n
\in \mathbb{Z}_+$ such that $\{a_n, a_n+1, \ldots, a_n+n\} \subset
S$.

\medskip

For a t.d.s. $(X, T)$, $x \in X$ and $U \subset X$ let
$$N(x, U)=\{n \in \mathbb{Z}_+: T^nx \in U\}.$$
It is well know that $x \in X$ is a minimal point if and only if
$N(x, U)$ is syndetic for any neighborhood $U$ of $x$; a t.d.s. $(X,
T)$ is weakly mixing if and only if $N(U, V)$ is thick for any
non-empty open subsets $U, V$ of $X$ (see, for example, \cite{Fur67,
Fur81}); and a t.d.s. $(X, T)$ is an $E$-system if and only if there
is a transitive point $x \in X$ such that $N(x, U)$ has a positive
upper Banach density for any neighborhood $U$ of $x$ (see, for example, \cite[Lemma 3.6]{HPY07}).

\medskip

Let $(X, T)$ be a t.d.s. and $(x, y) \in X^2$. It is a {\it proximal pair} if
$$\liminf_{n \rightarrow +\infty} d(T^nx, T^ny)=0;$$
and it is a {\it distal pair} if it is not proximal. Denote by $P(X,
T)$ or $P_X$ the set of all proximal pairs of $(X, T)$. A point $x$
is said to be {\it distal} if whenever $y$ is in the orbit closure
of $x$ and $(x, y)$ is proximal, then $x=y$. A t.d.s. $(X, T)$ is
called {\it distal} if $(x, x')$ is distal whenever $x, x' \in X$
are distinct.

\medskip

Let $\{p_i\}_{i=1}^{\infty}$ be an infinite sequence in
$\mathbb{N}$. One  defines
$$FS(\{p_i\}_{i=1}^{\infty}):=\{p_{i_1}+p_{i_2}+\cdots+p_{i_k}: 1 \le i_1<i_2<\cdots<i_k, k \in \mathbb{N}\}.$$
A subset $F \subset \mathbb{N}$ is called an {\it IP-set} if it
contains some $FS(\{p_i\}_{i=1}^{\infty})$. A subset of $\mathbb{N}$
is called an {\it IP$^*$-set} if it has non-empty intersection with
any IP-sets. Denote by $\mathcal{F}_{ip}^*$ the family of all
IP$^*$-sets. It is known that $\mathcal{F}_{ip}^*$ is a filter,
i.e., $F_1, F_2 \in \mathcal{F}_{ip}^*$ implies $F_1 \cap F_2 \in
\mathcal{F}_{ip}^*$ (see \cite[p. 179]{Fur81}); and $x$ is distal if
and only if $x$ is IP$^*$-recurrent, i.e., $N(x, U) \in
\mathcal{F}_{ip}^*$ for any neighborhood $U$ of $x$ (see, for example,
\cite[Theorem 9.11]{Fur81} or \cite[Proposition 2.7]{DSY12}).

\subsection{Hyperspace} Let $X$ be a compact metric space with a metric $\rho$. Let $K(X)$ be the hyperspace
on $X$, that is, the space of non-empty closed subsets of $X$ equipped with the {\it Hausdorff metric} $d_H$ defined by
$$d_H(A, B)=\max \left\{\max_{x \in A} \min_{y \in B} \rho(x, y), \max_{y \in B} \min_{x \in A} \rho(x, y)\right\} \mbox{\ for } A, B \in K(X).$$
This metric turns $K(X)$ into a compact space. It is easy to see that the finite subsets of $X$ are dense in $K(X)$.

\medskip

For any non-empty open subsets $U_1, \ldots, U_n$ of $X$, $n \in \mathbb{N}$, let
$$\langle U_1, \ldots, U_n\rangle=\left\{K \in K(X): K \subset \bigcup_{i=1}^n U_i \mbox{ and } K \cap U_i \neq \emptyset \mbox{ for each } i=1, \ldots, n\right\}.$$
The following family
$$\{\langle U_1, \ldots, U_n\rangle: U_1, \ldots, U_n \mbox{ are non-empty open subsets of } X, n \in \mathbb{N}\}$$
forms a basis for the topology obtained from the Hausdorff metric $d_H$, which is called the {\it Vietoris topology} (see \cite[Theorem 4.5]{Nad92}).

\medskip

Now let $(X, T)$ be a t.d.s. The transformation $T$ induces a continuous map $T_K: K(X) \rightarrow K(X)$ defined by
$$T_K(C)=TC \mbox{ for } C \in K(X).$$
It is easy to check that $(K(X), T_K)$ is also a t.d.s.

\subsection{Probability measures spaces} Let $M(X)$ denote the space of Borel probability measures on $X$ equipped with the {\it Prohorov metric} $D$ defined by
$$D(\mu, \nu)=\inf\left\{\epsilon: \begin{array}{l l} & \mu(A) \le \nu(A^{\epsilon})+\epsilon \mbox{ and }
 \nu(A) \le \mu(A^{\epsilon})+\epsilon \mbox{ for all }\\
& \mbox{Borel subsets } A \subset X \end{array}\right\}$$ for $\mu,
\nu \in M(X)$, where $A^{\epsilon}=\{x \in X: \rho(x, A)<\epsilon\}$.
The induced topology is just the {\it weak$^*$-topology} for
measures. It turns $M(X)$ into a compact metric space. A basis is
given by the collection of all sets of the form
$$V_{\mu}(f_1, \ldots, f_k; \epsilon)=\left\{\nu \in M(X): \left|\int_X \, f_i \,
\mathrm{d}\mu-\int_X \, f_i \, \mathrm{d}\nu\right|<\epsilon, 1
\le i \le k\right\},$$ where $\mu \in M(X), k \ge
1, f_i \in C(X, \mathbb{R})$ (here $C(X, \mathbb{R})$ denote the
Banach space of continuous real-valued functions on $X$ with the
supremum norm $\|\cdot\|$) and $\epsilon>0$. If
$\{f_n\}_{n=1}^{\infty}$ is a dense subset of $C(X, \mathbb{R})$,
then
$$d(\mu, \nu)=\sum_{n=1}^{\infty} \frac{|\int \, f_n \, \mathrm{d}\mu-\int \, f_n \, \mathrm{d}\nu|}{2^n \left(\|f_n\|+1\right)}$$
is also a metric on $M(X)$ giving the weak$^*$-topology.

\begin{lem} \label{lem:2-1}
{\rm (\cite[pp. 149]{Wal82})} The following statements are equivalent:
\begin{enumerate}
  \item $\mu_n \rightarrow \mu$ in the weak$^*$-topology;
  \item For each closed subset $F$ of $X$, $\limsup \limits_{n \rightarrow \infty} \mu_n(F)\le \mu(F)$;
  \item For each open subset $U$ of $X$, $\liminf \limits_{n \rightarrow \infty} \mu_n(U)\ge \mu(U)$.
\end{enumerate}
\end{lem}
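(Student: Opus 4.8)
The plan is to prove the three equivalences by first observing that (2) and (3) are dual to one another, and then closing the cycle through (1). Since each $\mu_n$ and $\mu$ is a \emph{probability} measure, for a closed set $F$ its complement $U = X \setminus F$ is open with $\mu_n(U) = 1 - \mu_n(F)$ and $\mu(U) = 1 - \mu(F)$; hence $\limsup_n \mu_n(F) \le \mu(F)$ holds exactly when $\liminf_n \mu_n(U) \ge \mu(U)$, and as $F$ ranges over all closed sets $U$ ranges over all open sets. This yields (2) $\Leftrightarrow$ (3) at once, so it remains to prove (1) $\Rightarrow$ (2) and that (2) together with (3) implies (1).

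For (1) $\Rightarrow$ (2), fix a closed set $F$ and $\epsilon > 0$, and use the metric to squeeze the indicator of $F$ between two continuous bounds. The function
\[
f_\epsilon(x) = \max\left\{0,\, 1 - \tfrac{1}{\epsilon}\,\rho(x, F)\right\}
\]
is continuous and satisfies $\chi_F \le f_\epsilon \le \chi_{F^\epsilon}$, where $F^\epsilon = \{x \in X : \rho(x,F) < \epsilon\}$. Then $\mu_n(F) \le \int_X f_\epsilon \, \mathrm{d}\mu_n$, and letting $n \to \infty$ via (1) gives $\limsup_n \mu_n(F) \le \int_X f_\epsilon \, \mathrm{d}\mu \le \mu(F^\epsilon)$. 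Finally, since $F$ is closed we have $\bigcap_{\epsilon > 0} F^\epsilon = F$, so by continuity from above of the finite measure $\mu$ the right-hand side tends to $\mu(F)$ as $\epsilon \to 0$, which is (2).

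For the converse, assume (2) and (3) and fix $f \in C(X, \R)$; after an affine normalization we may assume $0 \le f \le M$. Using the layer-cake formula $\int_X f \, \mathrm{d}\nu = \int_0^M \nu(\{f > t\}) \, \mathrm{d}t$, note that for each $t$ the set $\{f > t\}$ is open while $\{f \ge t\}$ is closed, and these two sets differ only on the at most countably many $t$ that are atoms of the distribution of $f$ under $\mu$, so replacing one by the other leaves every integral unchanged. Applying Fatou's lemma to $\mu_n(\{f > t\})$ together with (3) gives $\liminf_n \int_X f \, \mathrm{d}\mu_n \ge \int_X f \, \mathrm{d}\mu$, while applying the reverse Fatou lemma to $\mu_n(\{f \ge t\})$ (legitimate since these are bounded by the integrable constant $1$ on $[0,M]$) together with (2) gives $\limsup_n \int_X f \, \mathrm{d}\mu_n \le \int_X f \, \mathrm{d}\mu$. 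Combining the two inequalities shows $\int_X f \, \mathrm{d}\mu_n \to \int_X f \, \mathrm{d}\mu$ for every $f$, which is precisely (1).

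The main obstacle is the interface between topology and measure in the two nontrivial directions. In (1) $\Rightarrow$ (2) it is the use of the metric structure of $X$ to manufacture the Urysohn-type function $f_\epsilon$ and the subsequent passage $F^\epsilon \downarrow F$, which genuinely relies on $F$ being closed (no such one-sided estimate survives for a general Borel set). In the converse the delicate point is the bookkeeping with the level sets $\{f > t\}$ versus $\{f \ge t\}$, so that the open-set hypothesis (3) is paired with the ordinary Fatou lemma on the $\liminf$ side and the closed-set hypothesis (2) with the reverse Fatou lemma on the $\limsup$ side; everything else is routine.
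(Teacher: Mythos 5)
Your proof is correct, but it cannot be ``essentially the same as the paper's'': the paper gives no proof of this lemma at all, quoting it as a known result from Walters' book (Theorem 6.1, pp.~148--149), so the natural comparison is with that textbook argument. Your equivalence (2) $\Leftrightarrow$ (3) by complementation (valid because all measures involved are probability measures) and your implication (1) $\Rightarrow$ (2) via the Lipschitz cutoffs $f_\epsilon(x)=\max\{0,\,1-\rho(x,F)/\epsilon\}$ and the shrinking neighborhoods $F^\epsilon\downarrow F$ agree in substance with Walters, who obtains the same estimate from Urysohn's lemma together with regularity of $\mu$; your explicit formula sidesteps the appeal to regularity. The genuine difference is in the converse direction: Walters routes it through a fourth condition ($\mu_n(A)\to\mu(A)$ for every Borel set $A$ with $\mu(\partial A)=0$), deduced from (2) and (3) applied to $\overline{A}$ and to the interior of $A$, and then recovers (1) by approximating $f$ with step functions built on level sets whose boundaries are $\mu$-null. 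You instead go directly from (2) and (3) to (1) by the layer-cake formula, pairing the open sets $\{f>t\}$ with (3) and Fatou's lemma, and the closed sets $\{f\ge t\}$ with (2) and the reverse Fatou lemma (legitimate on the finite interval $[0,M]$ because of the uniform bound $\mu_n(\cdot)\le 1$). This is a clean, self-contained alternative that proves exactly the stated three-way equivalence without the detour through boundary-null sets. One small imprecision worth fixing: to write $\int_X f\,\mathrm{d}\mu_n=\int_0^M \mu_n(\{f\ge t\})\,\mathrm{d}t$ you need the set of $t$ with $\mu_n(\{f=t\})>0$ to be Lebesgue-null for each $n$, not merely for the limit measure $\mu$ as your wording suggests; this is equally automatic (any finite measure has at most countably many atoms), but the exceptional set of $t$'s depends on $n$.
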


For $x \in X$, let $\delta_x \in M(X)$ denote the {\it Dirac point measure} of $x$ defined by
$$\delta_x(A)=\left\{ \begin{array}{lc} 1, & x \in A \\ 0, & x \notin A. \end{array}\right.$$
It is easy to see that the map $x \mapsto \delta_x$ imbeds $X$
inside $M(X)$. Note that $M(X)$ is convex and that the point
measures are just the extremal points of $M(X)$. It follows that the
convex combinations of point measures are dense in $M(X)$.

For $\mu \in M(X)$ and a Borel subset $A$ of $X$ with $\mu(A)>0$, the {\it conditional measure} of $A$ is defined by
$$\mu_A(B)=\frac{\mu(A \cap B)}{\mu(A)}$$
for all Borel subsets $B \subset X$.

\begin{lem} \label{lem:2-2}
Let $X, Y$ be two compact metric spaces, $\mu \in M(X)$ and $\nu \in
M(Y)$.

\begin{enumerate}
  \item  If $A=\bigcup_{i=1}^n A_i$, where $A_1, \ldots, A_n$ are Borel subsets of $X$ with $\mu(A_i)>0$ and $\mu(A_i \cap A_j)=0$
  for all $1 \le i<j \le n$, then $\mu_A=\sum_{i=1}^n \frac{\mu(A_i)}{\mu(A)} \mu_{A_i}$.
  \item Let $\epsilon>0$ and $A$ be a Borel subset of $X$ with $\mu(A)>0$. If $B$ is a Borel subset of $X$ such that
  $\mu(B)>0$ and $\mu(A \bigtriangleup B)< \mu(A) \cdot \epsilon$, then $d(\mu_A, \mu_B) \le 2\epsilon$.

  \item If $\pi: (X, \mu) \rightarrow (Y, \nu)$ is measurable and $\pi \mu=\nu$, then $\pi \mu_{\pi^{-1}A}=\nu_A$ for each Borel subset $A$ of $Y$.
\end{enumerate}
\end{lem}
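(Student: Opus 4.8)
My plan is to verify all three assertions by direct computation from the defining formula $\mu_A(B)=\mu(A\cap B)/\mu(A)$, testing the claimed identities against an arbitrary Borel set in parts (1) and (3), and against the functions $f_n$ defining the metric $d$ in part (2). For (1) I would first use the essential disjointness hypothesis $\mu(A_i\cap A_j)=0$ ($i\neq j$) to get $\mu(A)=\sum_{i=1}^n\mu(A_i)$, and, after intersecting everything with an arbitrary Borel set $B$, also $\mu(A\cap B)=\sum_{i=1}^n\mu(A_i\cap B)$. Dividing the second identity by $\mu(A)$ and inserting the factors $\mu(A_i)/\mu(A_i)$ rewrites $\mu_A(B)$ as $\sum_{i=1}^n\frac{\mu(A_i)}{\mu(A)}\mu_{A_i}(B)$, which is precisely the value of the right-hand measure on $B$; since $B$ is arbitrary the two measures agree.

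For (3) I would test the pushforward against an arbitrary Borel $C\subseteq Y$. Unwinding the definitions gives $\pi\mu_{\pi^{-1}A}(C)=\mu_{\pi^{-1}A}(\pi^{-1}C)=\mu(\pi^{-1}A\cap\pi^{-1}C)/\mu(\pi^{-1}A)$, and then $\pi^{-1}A\cap\pi^{-1}C=\pi^{-1}(A\cap C)$ together with $\pi\mu=\nu$ (so that $\mu(\pi^{-1}S)=\nu(S)$ for every Borel $S\subseteq Y$) turns this into $\nu(A\cap C)/\nu(A)=\nu_A(C)$. Here I am using implicitly that $\mu(\pi^{-1}A)=\nu(A)>0$, which is what makes the conditional measures well defined.

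The genuine work is in (2). For a fixed $f\in C(X,\mathbb{R})$ I would control $\left|\int f\,\mathrm{d}\mu_A-\int f\,\mathrm{d}\mu_B\right|=\left|\frac{1}{\mu(A)}\int_A f\,\mathrm{d}\mu-\frac{1}{\mu(B)}\int_B f\,\mathrm{d}\mu\right|$ by splitting via the triangle inequality at the intermediate term $\frac{1}{\mu(A)}\int_B f\,\mathrm{d}\mu$. The first difference is bounded by $\frac{1}{\mu(A)}\left|\int_A f\,\mathrm{d}\mu-\int_B f\,\mathrm{d}\mu\right|\le\frac{\|f\|}{\mu(A)}\mu(A\bigtriangleup B)<\|f\|\epsilon$, using that the integrands differ only on $A\bigtriangleup B$; the second is bounded by $\left|\frac{1}{\mu(A)}-\frac{1}{\mu(B)}\right|\cdot\left|\int_B f\,\mathrm{d}\mu\right|\le\frac{\mu(A\bigtriangleup B)}{\mu(A)\mu(B)}\,\|f\|\,\mu(B)<\|f\|\epsilon$, where I invoke the elementary consequence $|\mu(A)-\mu(B)|\le\mu(A\bigtriangleup B)$ of the hypothesis. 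This gives $\left|\int f\,\mathrm{d}\mu_A-\int f\,\mathrm{d}\mu_B\right|<2\|f\|\epsilon\le 2(\|f\|+1)\epsilon$ for every $f$. Substituting $f_n$ into the series defining $d$ then yields $d(\mu_A,\mu_B)<2\epsilon\sum_{n\ge1}2^{-n}=2\epsilon$.

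The main obstacle, such as it is in so routine a lemma, is purely one of constant bookkeeping: the two-term split must deliver exactly $\|f\|\epsilon$ in each piece, and the clean cancellation $2\|f\|\epsilon\le 2(\|f\|+1)\epsilon$ with the $\|f_n\|+1$ in the denominator of $d$ is what produces the stated bound $2\epsilon$ after summing the geometric series. This is also the reason the hypothesis is phrased as $\mu(A\bigtriangleup B)<\mu(A)\cdot\epsilon$, with $\mu(A)$ (rather than $\min\{\mu(A),\mu(B)\}$) on the right: that exact form is what makes both error terms come out below $\|f\|\epsilon$. I note in passing that the same uniform estimate $|\mu_A(C)-\mu_B(C)|<2\epsilon$, obtained by the identical split applied to indicator functions, would also bound the Prohorov metric $D(\mu_A,\mu_B)$ by $2\epsilon$, so the statement is insensitive to which of the two equivalent metrics one uses.
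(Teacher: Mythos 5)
Your proposal is correct and follows essentially the same route as the paper: part (1) by the same telescoping computation using essential disjointness, part (2) by the identical triangle-inequality split at the intermediate term $\frac{1}{\mu(A)}\int_B f\,\mathrm{d}\mu$ with the same bounds $\bigl|\int_A f\,\mathrm{d}\mu-\int_B f\,\mathrm{d}\mu\bigr|\le\|f\|\,\mu(A\bigtriangleup B)$ and $|\mu(A)-\mu(B)|\le\mu(A\bigtriangleup B)$, followed by summing the series defining $d$, and part (3) by direct unwinding (which the paper simply declares obvious). Your closing observations about the role of the constant bookkeeping and the Prohorov metric are accurate asides but do not change the argument.
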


\begin{proof}
(1) Note that for any Borel subsets $B$ of $X$, we have
$$\mu_A(B)=\frac{\mu(A \cap B)}{\mu(A)}=\sum_{i=1}^n \frac{\mu(A_i \cap B)}{\mu(A)}=\sum_{i=1}^n \frac{\mu(A_i)}{\mu(A)} \mu_{A_i}(B).$$

\medskip

(2) Assume that $A, B$ are two Borel subsets of $X$ such that $\mu(A) \mu(B)>0$ and $\mu(A \bigtriangleup B)< \mu(A) \cdot \epsilon$.
Then for each $f \in C(X, \mathbb{R})$ we have
\begin{eqnarray*}
\left|\int \, f \, \mathrm{d}\mu_A - \int \, f \, \mathrm{d}\mu_B\right| &=& \left|\frac{1}{\mu(A)} \int_A \, f \,
\mathrm{d}\mu-\frac{1}{\mu(B)} \int_B \, f \, \mathrm{d}\mu\right|\\
&\le& \frac{1}{\mu(A)} \cdot \left|\int_A \, f \, \mathrm{d}\mu-\int_B \, f \, \mathrm{d}\mu\right| +
 \frac{|\mu(A)-\mu(B)|}{\mu(A) \cdot \mu(B)}\cdot \left|\int_B \, f \, \mathrm{d}\mu \right| \\
&\le& \frac{2 \cdot \mu(A \bigtriangleup B)}{\mu(A)} \cdot \|f\|<2 \|f\| \cdot \epsilon.
\end{eqnarray*}
Hence,
$$d(\mu_A, \mu_B)=\sum_{n=1}^{\infty} \frac{|\int \, f_n \, \mathrm{d}\mu-\int \, f_n \,
\mathrm{d}\nu|}{2^n \left(\|f_n\|+1\right)} \le \sum_{n=1}^{\infty} \frac{\epsilon}{2^{n-1}}=2\epsilon.$$

\medskip

(3) is an obvious fact.
\end{proof}

\medskip

Let $(X, T)$ be a t.d.s. The transformation $T$ induces a continuous map $T_M: M(X) \rightarrow M(X)$ defined by
$$(T_M\mu)(A)=\mu(T^{-1}A), \mu \in M(X),  A \subset X \mbox{ Borel}.$$
It is easy to check that $(M(X), T_M)$ is also a t.d.s. For $n \in
\mathbb{N}$, define
$$M_n(X)=\left\{\frac{1}{n}\sum_{i=1}^n \delta_{x_i}: x_i \in X \mbox{ (not necessarily distinct)}\right\}.$$

\begin{lem} \label{lem:2-3}
$M_n(X)$ is closed in $M(X)$ and invariant under $T_M$. $\bigcup_{n=1}^{\infty} M_n(X)$ is dense in $M(X)$.
\end{lem}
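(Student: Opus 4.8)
The plan is to treat the three assertions separately, starting with invariance (immediate), then closedness (a compactness argument), and finally density.

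First I would record the basic identity $T_M\delta_x=\delta_{Tx}$ for every $x\in X$: for a Borel set $A$ we have $(T_M\delta_x)(A)=\delta_x(T^{-1}A)$, which equals $1$ precisely when $Tx\in A$. Since $T_M$ is induced by the (linear) pushforward of measures, it is affine, so it sends $\frac1n\sum_{i=1}^n\delta_{x_i}$ to $\frac1n\sum_{i=1}^n\delta_{Tx_i}$, which again lies in $M_n(X)$. Thus $T_M M_n(X)\subset M_n(X)$ with essentially no computation.

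Next, for closedness I would introduce the averaging map $\Phi_n\colon X^n\to M(X)$, $\Phi_n(x_1,\dots,x_n)=\frac1n\sum_{i=1}^n\delta_{x_i}$, whose image is exactly $M_n(X)$. The key point is that $\Phi_n$ is continuous: for any $f\in C(X,\mathbb{R})$ one has $\int f\,\mathrm d\big(\Phi_n(x_1,\dots,x_n)\big)=\frac1n\sum_{i=1}^n f(x_i)$, which depends continuously on $(x_1,\dots,x_n)$; feeding this into the explicit metric $d$ on $M(X)$ recalled just before Lemma \ref{lem:2-1} (equivalently, into the definition of the weak$^*$-topology through the basic neighborhoods $V_\mu(f_1,\dots,f_k;\epsilon)$) shows continuity. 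Since $X^n$ is compact and $M(X)$ is Hausdorff, $M_n(X)=\Phi_n(X^n)$ is compact, hence closed in $M(X)$.

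Finally, for density I would start from the fact, already noted in the text, that the finite convex combinations of point measures are dense in $M(X)$. It therefore suffices to approximate an arbitrary such combination $\nu=\sum_{i=1}^k\lambda_i\delta_{x_i}$ (with $\lambda_i\ge 0$ and $\sum_{i=1}^k\lambda_i=1$) by elements of $\bigcup_{n=1}^\infty M_n(X)$. For large $n$, choose nonnegative integers $m_1,\dots,m_k$ with $\sum_{i=1}^k m_i=n$ and each $|m_i/n-\lambda_i|$ small (rational points with denominator $n$ become uniformly dense in the simplex as $n\to\infty$); then $\frac1n\sum_{i=1}^k m_i\delta_{x_i}\in M_n(X)$, and since for fixed points $x_i$ the assignment $(\lambda_1,\dots,\lambda_k)\mapsto\sum_{i=1}^k\lambda_i\delta_{x_i}$ is continuous in the coefficients, these measures converge to $\nu$ in the weak$^*$-topology. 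The only part needing genuine (if routine) care is this coefficient approximation by fractions with a common denominator; once that is in place, together with the identity $T_M\delta_x=\delta_{Tx}$ and the continuity of $\Phi_n$, all three claims follow.
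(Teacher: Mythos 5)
Your proof is correct: the paper states Lemma \ref{lem:2-3} without any proof, treating it as a standard fact, so there is no argument of the authors' to compare against. Your three steps --- the identity $T_M\delta_x=\delta_{Tx}$ plus affinity of $T_M$ for invariance, compactness of the image of the continuous averaging map $\Phi_n\colon X^n\to M(X)$ for closedness, and approximation of the coefficients of a convex combination of point measures by fractions with common denominator (using the density of such combinations, which the paper records just before the lemma) for density --- are each sound and together give a complete proof.
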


Let $M(X, T)=\{\mu \in M(X): T_M \mu=\mu\}$ be the set of all
$T$-invariant measures and $\mathcal{E}(X, T)$ be the set of all
ergodic measures in $M(X, T)$. It is well known that $M(X, T)$ is
nonempty, compact and convex. If $M(X, T)$ consists of a single
point, then $(X, T)$ is said to be {\it uniquely ergodic}.

\subsection{Product system, factor and extenison}
For two t.d.s. $(X, T)$ and $(Y, S)$, their {\it product system} $(X \times Y, T \times S)$ is defined by
$$T \times S(x, y)=(Tx, Sy) \mbox{ for } x \in X \mbox{ and } y \in Y.$$
Higher order product systems are defined analogously and we write $(X^n, T^{(n)})$ for the $n$-fold product system $(X \times \cdots \times X, T \times \cdots \times T)$.

\medskip

Let $(X, T)$ and $(Y, S)$ be two t.d.s. A continuous map $\pi: X
\rightarrow Y$ is called a {\it homomorphism} or {\it factor map}
between $(X, T)$ and $(Y, S)$ if it is onto and $\pi \circ T=S \circ
\pi$. In this case we say $(X, T)$ is an {\it extension} of $(Y, S)$
or $(Y, S)$ is a {\it factor} of $(X, T)$. It is easy to see that
$\pi$ induces in an obvious way a homomorphism from $(M(X), T_M)$
onto $(M(Y), S_M)$ and from $(K(X), T_K)$ onto $(K(Y), S_K)$.

\section{Dynamic properties on hyperspace}

In this section, we will study some dynamic properties of $(K(X), T_K)$. Firstly, we recall the following useful lemma.

\begin{lem} \label{lem:3-1}
{\rm (see \cite{Ban05})} Let $(X, T)$ be a t.d.s.. Then the following statements are equivalent:
\begin{enumerate}
  \item $(K(X), T_K)$ is weakly mixing;
  \item $(K(X), T_K)$ is transitive;
  \item $(X, T)$ is weakly mixing.
\end{enumerate}
\end{lem}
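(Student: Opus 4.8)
The plan is to establish the cycle of implications $(3)\Rightarrow(1)\Rightarrow(2)\Rightarrow(3)$. The implication $(1)\Rightarrow(2)$ is immediate: weak mixing of any system forces transitivity, since transitivity of the product $(K(X)\times K(X),T_K\times T_K)$ passes to each coordinate. The two substantive steps are $(3)\Rightarrow(1)$ and $(2)\Rightarrow(3)$, and I would handle both by working with the basic Vietoris sets $\langle U_1,\dots,U_n\rangle$ and the hitting-time sets $N(\cdot,\cdot)$, translating statements about $K(X)$ back into finitely many conditions on $X$.

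For $(3)\Rightarrow(1)$ I want to show $(K(X)\times K(X),T_K\times T_K)$ is transitive. Given a source $\langle U_1,\dots,U_a\rangle\times\langle U_1',\dots,U_b'\rangle$ and a target $\langle V_1,\dots,V_c\rangle\times\langle V_1',\dots,V_d'\rangle$, I would look for a single $n$ together with finite sets $E,E'$ realizing the transfer. The natural candidate is to take $E$ to be a grid $\{x_{ij}:1\le i\le a,\ 1\le j\le c\}$ with $x_{ij}\in U_i$ and $T^n x_{ij}\in V_j$; then $E$ meets every $U_i$ and lies in $\bigcup_i U_i$, while $T^n E$ meets every $V_j$ and lies in $\bigcup_j V_j$, so $E\in\langle U_1,\dots,U_a\rangle$ and $T_K^n E\in\langle V_1,\dots,V_c\rangle$. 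Building such a grid (and the analogous $E'$) only requires $n\in\bigcap_{i,j}N(U_i,V_j)\cap\bigcap_{i',j'}N(U_{i'}',V_{j'}')$. Since $(X,T)$ is weakly mixing, the product system $(X^M,T^{(M)})$ is transitive for every $M$ (the classical fact of Furstenberg, \cite{Fur67,Fur81}), and transitivity of $X^M$ with $M=ac+bd$ says precisely that this finite intersection of hitting-time sets is nonempty. This produces the required $n$.

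For $(2)\Rightarrow(3)$ I would prove the stronger conclusion that $N(U,V)$ is thick for all nonempty open $U,V\subseteq X$, which is equivalent to weak mixing by the characterization recalled in Section 2. Fix such $U,V$ and $m\in\mathbb{N}$, and set $W_i=T^{-i}V$ for $0\le i\le m$; each $W_i$ is nonempty because $T$ is surjective and open because $T$ is continuous. The key observation is that a run $\{n,n+1,\dots,n+m\}\subseteq N(U,V)$ is exactly one common-source, multi-target hit in the hyperspace: applying transitivity of $(K(X),T_K)$ to the basic open sets $\langle U\rangle$ and $\langle W_0,W_1,\dots,W_m\rangle$ yields an $n$ and a closed set $E$ with $\emptyset\neq E\subseteq U$ and $T^n E\in\langle W_0,\dots,W_m\rangle$. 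Since $T^n E$ meets each $W_i$ while $E\subseteq U$, we obtain $T^n U\cap T^{-i}V\neq\emptyset$, that is $T^{n+i}U\cap V\neq\emptyset$, for every $0\le i\le m$; hence $\{n,\dots,n+m\}\subseteq N(U,V)$. As $m$ is arbitrary, $N(U,V)$ is thick and $(X,T)$ is weakly mixing.

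I expect the crux to be this last step. The naive attempt to recover weak mixing by feeding $\langle U_1,U_2\rangle$ and $\langle V_1,V_2\rangle$ into hyperspace transitivity fails, because a single transfer of a closed set does not control which source point lands in which target, so the matching between the $U_i$ and the $V_j$ is lost. The device of replacing the multi-target problem by the chain $W_i=T^{-i}V$ converts the uncontrolled matching into the controlled statement of thickness, and this is exactly where the surjectivity and continuity of $T$ enter. The remaining ingredients, namely the grid construction and the passage from $X\times X$ to $X^M$, are routine once this idea is in place.
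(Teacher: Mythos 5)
Your proof is correct. Note first that the paper does not prove this lemma at all; it is quoted from Banks \cite{Ban05}, so the comparison is with that standard argument rather than with anything in the text. Your steps $(1)\Rightarrow(2)$ and $(3)\Rightarrow(1)$ (the grid construction plus Furstenberg's theorem that weak mixing makes every finite product $(X^M,T^{(M)})$ transitive) follow the classical route. Where you genuinely diverge is $(2)\Rightarrow(3)$: the usual proof feeds the pair $\langle U\rangle$, $\langle U,V\rangle$ into transitivity of $(K(X),T_K)$, obtaining a closed $E\subseteq U$ with $T^nE$ meeting both $U$ and $V$, hence $n\in N(U,U)\cap N(U,V)$, and concludes by Petersen's criterion (which is Lemma \ref{lem:4-2} of this paper); your chain trick $W_i=T^{-i}V$ instead extracts thickness of $N(U,V)$ directly and invokes the thickness characterization of weak mixing from Section 2. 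Both are valid, and you correctly identify why the naive $\langle U_1,U_2\rangle$-to-$\langle V_1,V_2\rangle$ attempt fails (loss of matching). The trade-off: your argument yields the stronger thickness conclusion in one stroke, but it leans on the standing hypothesis that $T$ is surjective (to make each $W_i$ nonempty), whereas the Petersen route needs no surjectivity. One small patch: the paper defines transitivity by requiring $N(\cdot,\cdot)$ to be \emph{infinite}, so in $(3)\Rightarrow(1)$ you should add that your intersection of hitting-time sets contains a hitting-time set of the weakly mixing system $(X^M,T^{(M)})$, which is thick and in particular infinite, not merely nonempty.
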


Let $(X, T)$ be a t.d.s. We say that $(X, T)$ has {\it dense small
periodic sets} \cite{HY05} if for any non-empty open subset $U$ of
$X$ there exists a closed subset $Y$ of $U$ and $k \in \mathbb{N}$
such that $T^kY \subset Y$. Clearly, every $P$-system has dense
small periodic sets. If $(X, T)$ is transitive and has dense small
periodic sets, then it is an $M$-system.

The system $(X, T)$ is called an {\it HY-system} if it is totally
transitive and has dense small periodic sets. In \cite{HY05}, Huang
and Ye showed that an HY-system is also weakly mixing and disjoint
from any minimal systems. There exists an HY-system without periodic
points \cite[Example 3.7]{HY05}. In \cite{Li11}, the author
characterized HY-systems by transitive points via the family of
weakly thick sets. Recently, Li \cite{Li13} studied the Devaney's
chaos on the hyperspace. That is, he obtained

\begin{thm} \label{thm:3-2}
{\rm (see \cite{Li13})} $(X, T)$ is an HY-system if and only if $(K(X), T_K)$ is a $P$-system.
\end{thm}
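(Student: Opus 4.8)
The plan is to prove both implications, passing between weak mixing of $(X,T)$ and transitivity of $(K(X),T_K)$ via Lemma \ref{lem:3-1}, and testing density of periodic points in $K(X)$ against the Vietoris basis $\langle U_1,\ldots,U_n\rangle$.

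For the forward direction, suppose $(X,T)$ is an HY-system. Since an HY-system is weakly mixing, Lemma \ref{lem:3-1} gives that $(K(X),T_K)$ is transitive, so it remains to show that the periodic points of $(K(X),T_K)$ are dense. I would fix a basic open set $\langle U_1,\ldots,U_n\rangle$ and use dense small periodic sets to choose, for each $i$, a non-empty closed $Y_i\subset U_i$ and $k_i\in\N$ with $T^{k_i}Y_i\subset Y_i$. The key point is that forward invariance is not yet periodicity in $K(X)$: a closed set with $T^{k_i}Y_i$ a proper subset of $Y_i$ is \emph{not} a periodic point of $T_K$. I would therefore replace $Y_i$ by $Y_i'=\bigcap_{m\ge 0}T^{mk_i}Y_i$, a non-empty closed subset of $Y_i\subset U_i$; since the sets $T^{mk_i}Y_i$ decrease and $T^{k_i}$ (being continuous on the compact space $X$) commutes with decreasing intersections of closed sets, one gets $T^{k_i}Y_i'=\bigcap_{m\ge 0}T^{(m+1)k_i}Y_i=Y_i'$, i.e. \emph{exact} periodicity. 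Setting $C=\bigcup_{i=1}^n Y_i'$ and $K=k_1\cdots k_n$, each $Y_i'$ is fixed by $T^K$, hence $T_K^K C=C$, while $C\subset\bigcup_i U_i$ and $C\cap U_i\supset Y_i'\neq\emptyset$. Thus $C$ is a periodic point of $(K(X),T_K)$ lying in $\langle U_1,\ldots,U_n\rangle$, which proves density.

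For the converse, suppose $(K(X),T_K)$ is a $P$-system. Its transitivity yields, through Lemma \ref{lem:3-1}, that $(X,T)$ is weakly mixing, and since weakly mixing systems are totally transitive (a standard fact, as $(X,T^n)$ is then also weakly mixing and hence transitive), $(X,T)$ is totally transitive. To obtain dense small periodic sets, I would fix a non-empty open $U\subset X$ and consider the non-empty open set $\langle U\rangle=\{K\in K(X):K\subset U\}$. By density of periodic points of $T_K$ there is a periodic $C\in\langle U\rangle$, that is, a non-empty closed $C\subset U$ with $T^kC=C$ for some $k\in\N$; in particular $T^kC\subset C$, so $(X,T)$ has dense small periodic sets. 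Being totally transitive with dense small periodic sets, $(X,T)$ is an HY-system.

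The only genuinely non-formal step is the upgrade from $T^{k_i}Y_i\subset Y_i$ to $T^{k_i}Y_i'=Y_i'$, together with the observation that a finite union of exactly periodic closed sets is again a periodic point of $(K(X),T_K)$; everything else is a direct translation through Lemma \ref{lem:3-1} and the definition of the Vietoris topology.
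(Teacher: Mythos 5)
Your proof is correct, but there is nothing in the paper to compare it against: Theorem \ref{thm:3-2} is stated as a quoted result from \cite{Li13}, and the paper gives no proof of its own. So your proposal supplies a self-contained argument where the paper relies on a citation. The one genuinely delicate point is exactly the one you isolate: dense small periodic sets only provide a non-empty closed $Y\subset U$ with $T^kY\subset Y$, which need not be a periodic point of $(K(X),T_K)$. Your repair is valid: the sets $T^{mk}Y$ form a decreasing sequence of non-empty compact sets, so $Y'=\bigcap_{m\ge 0}T^{mk}Y$ is non-empty and closed, and since a continuous map on a compact space carries a decreasing intersection of compact sets onto the intersection of their images, $T^kY'=\bigcap_{m\ge 1}T^{mk}Y=Y'$ exactly. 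The finite-union step is also fine, because images commute with unions and each $Y_i'$ is fixed by $T^{K}$ with $K=k_1\cdots k_n$ (any common multiple of the $k_i$ works), so $C=\bigcup_{i=1}^n Y_i'$ is a genuine $T_K$-periodic point in $\langle U_1,\ldots,U_n\rangle$. The converse is routine, as you say: transitivity of $(K(X),T_K)$ gives weak mixing of $(X,T)$ by Lemma \ref{lem:3-1}, hence total transitivity (each $(X,T^n)$ is again weakly mixing, hence transitive), and a $T_K$-periodic point $C\in\langle U\rangle$ is literally a small periodic set inside $U$. In short: the proof is complete and correct, and its key intersection trick is the standard way to bridge the gap between forward-invariance and exact periodicity that the cited reference must also address in some form.
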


We recall that a t.d.s. $(X, T)$ is said to be {\it pointwise
periodic} if all points in $X$ are  periodic; and it is said to be
{\it periodic} if there exists $m \in \mathbb{N}$ such that $T^m$ is
the identity map of $X$. It is clear that if $(K(X), T_K)$ is
pointwise periodic, then $(X, T)$ is also pointwise periodic.
However, the following example shows that the converse is not true.

\begin{ex} \label{ex:3-3}
Let $X=\{0\} \cup \left\{\frac{1}{n}: n \in \mathbb{N} \right\}$ with the subspace topology of the real line $\mathbb{R}$. Define $T: X \rightarrow X$ as
\begin{itemize}
 \item $T(0)=0$ and $T(1)=1$;

 \item $T\left(\frac{1}{2^n}\right)=\frac{1}{2^n+1}, \ldots, T\left(\frac{1}{2^{n+1}-1}\right)=\frac{1}{2^n}$ for each $n \in \mathbb{N}$.
\end{itemize}
Note that $(X, T)$ is pointwise periodic. Let $K=\{0\} \cup \left\{\frac{1}{2^n}: n \in \mathbb{Z}_+\right\}$. Then $K$ is not a periodic point of $(K(X), T_K)$.
\end{ex}

What we have is the following theorem and we omit the simple proof.

\begin{thm} \label{thm:3-4}
The following statements are equivalent:
\begin{enumerate}
  \item $(X, T)$ is periodic;
  \item $(K(X), T_K)$ is periodic;
  \item $(K(X), T_K)$ is pointwise periodic.
\end{enumerate}
\end{thm}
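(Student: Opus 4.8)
The plan is to establish the cycle $(1)\Rightarrow(2)\Rightarrow(3)\Rightarrow(1)$, with essentially all of the content concentrated in the last implication. For $(1)\Rightarrow(2)$, if $T^m=\mathrm{id}_X$ then for every $C\in K(X)$ we have $T_K^m(C)=T^m(C)=\{T^mx:x\in C\}=C$, so $T_K^m=\mathrm{id}_{K(X)}$ and $(K(X),T_K)$ is periodic. The implication $(2)\Rightarrow(3)$ is immediate, since a periodic system is in particular pointwise periodic.

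For the substantial implication $(3)\Rightarrow(1)$, I would first record two reductions. Applying pointwise periodicity to singletons and using $T_K^n\{x\}=\{T^nx\}$ shows that every $x\in X$ is a periodic point of $(X,T)$; write $p(x)$ for its least period. Moreover $T$ is then injective: if $Tx=Ty$, applying $T^{p(x)p(y)-1}$ gives $x=T^{p(x)p(y)}x=T^{p(x)p(y)}y=y$. Hence $T$ is a homeomorphism, so it generates a $\mathbb{Z}$-action whose orbits partition $X$; in particular two points of different period have disjoint orbits. Finally, $(X,T)$ is periodic if and only if $\{p(x):x\in X\}$ is bounded (a bound $N$ yields $T^{N!}=\mathrm{id}$), so it suffices to derive a contradiction from the assumption that the periods are unbounded.

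The heart of the matter is to convert unbounded periods into a single closed set that fails to be $T_K$-periodic. Choosing points whose periods are strictly increasing and passing, by compactness of $X$, to a convergent subsequence $x_k\to x_*$ with $n_k:=p(x_k)\to\infty$, I note that the orbits $O_k:=\mathrm{orb}(x_k,T)$ are pairwise disjoint (distinct periods), the $x_k$ are distinct, and $K:=\{x_k:k\in\mathbb{N}\}\cup\{x_*\}$ is closed, so $K\in K(X)$. I claim $T_K^jK\neq K$ for every $j\ge1$. Indeed, fix $j\ge1$; since $n_k\to\infty$ and at most one orbit contains $x_*$, I may choose $k$ with $n_k>j$ and $x_*\notin O_k$. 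Then $T^jx_k\in O_k$ with $T^jx_k\neq x_k$ (as $1\le j<n_k=p(x_k)$), while $K\cap O_k=\{x_k\}$ because $x_l\notin O_k$ for $l\neq k$ by disjointness and $x_*\notin O_k$ by the choice of $k$. Hence $T^jx_k\in T^j(K)\setminus K$, so $T_K^jK=T^j(K)\neq K$, contradicting that $K$ is a periodic point of $(K(X),T_K)$.

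The only real obstacle is this last step: pointwise periodicity of $(X,T)$ by itself does \emph{not} force a uniform period, as Example~\ref{ex:3-3} is precisely a pointwise periodic system with unbounded periods, so one must genuinely exploit the stronger hypothesis on $K(X)$. The mechanism is that a single point taken from each of infinitely many orbits of growing period assembles into one closed set whose return to itself would require a common multiple of all the periods, which is impossible; this is exactly what breaks down for the set $K$ exhibited in Example~\ref{ex:3-3}.
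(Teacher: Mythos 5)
Your proof is correct. The paper itself omits the proof of this theorem (it says ``we omit the simple proof''), so there is nothing to compare against; your cycle $(1)\Rightarrow(2)\Rightarrow(3)\Rightarrow(1)$ is sound, and in particular the key step --- assembling one point from each of infinitely many pairwise disjoint orbits of unbounded period, together with the limit $x_*$, into a single closed set $K$ whose $T_K$-periodicity would force a common multiple of unboundedly many periods --- is exactly the kind of argument the authors presumably had in mind, and it correctly isolates why Example~\ref{ex:3-3} does not contradict the theorem.
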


Next we study the characterization of $M$-systems and $E$-systems on the hyperspace.

\begin{thm} \label{thm:3-5}
$(X,T)$ is a weakly mixing $M$-system if and only if $(K(X),T_K)$ is an $M$-system.
\end{thm}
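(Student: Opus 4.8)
The plan is to prove both implications, in each case using Lemma~\ref{lem:3-1} to handle the transitivity/weak-mixing part and reserving a separate argument for the density of minimal points. Throughout I will use the action of the Stone--\v{C}ech compactification $\bZ$ of $\Z$ on a t.d.s.: for $p\in\bZ$ and a point $y$ set $p\cdot y=\lim_{n\to p}T^{n}y$. This action is defined compatibly on $X$, on $(X^{n},T^{(n)})$ (diagonally) and on $(K(X),T_K)$, it commutes with every factor map, and for a \emph{minimal idempotent} $u\in\bZ$ the point $u\cdot y$ is always a minimal point, while $y$ is minimal if and only if $u\cdot y=y$ for some minimal idempotent $u$. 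The only hyperspace-specific fact I need is that for $K\in K(X)$ one has $\{u\cdot x:x\in K\}\subseteq u\cdot K$, by upper semicontinuity of Hausdorff limits.

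For the direction $(\Leftarrow)$, suppose $(K(X),T_K)$ is an $M$-system. It is transitive, so by Lemma~\ref{lem:3-1} $(X,T)$ is weakly mixing, hence transitive; it remains to show that minimal points are dense in $X$. Fix a non-empty open $U\subseteq X$ and consider $\langle U\rangle=\{K\in K(X):K\subseteq U\}$, a non-empty open subset of $K(X)$. Since minimal points are dense in $K(X)$, there is a minimal point $K_0\in\langle U\rangle$, so $K_0\subseteq U$. Choose a minimal idempotent $u\in\bZ$ with $u\cdot K_0=K_0$, and pick any $x_0\in K_0$. Then $u\cdot x_0$ is a minimal point of $(X,T)$ and $u\cdot x_0\in\{u\cdot x:x\in K_0\}\subseteq u\cdot K_0=K_0\subseteq U$, so $U$ contains a minimal point of $X$. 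This proves density.

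For $(\Rightarrow)$, suppose $(X,T)$ is a weakly mixing $M$-system. By Lemma~\ref{lem:3-1}, $(K(X),T_K)$ is weakly mixing, in particular transitive, so it only remains to prove that minimal points are dense in $K(X)$. Since the sets $\langle U_1,\dots,U_n\rangle$ form a basis, it suffices, for arbitrary non-empty open $U_1,\dots,U_n\subseteq X$, to produce a minimal point of $(K(X),T_K)$ inside $\langle U_1,\dots,U_n\rangle$. Consider the continuous equivariant map $\pi_n\colon X^{n}\to K(X)$, $\pi_n(x_1,\dots,x_n)=\{x_1,\dots,x_n\}$, which sends minimal points to minimal points. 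Hence if $(x_1,\dots,x_n)$ is a minimal point of $(X^{n},T^{(n)})$ with $x_i\in U_i$ for each $i$, then $F=\{x_1,\dots,x_n\}$ is a minimal point of $(K(X),T_K)$ lying in $\langle U_1,\dots,U_n\rangle$. The problem is thus reduced to showing that $(X^{n},T^{(n)})$ has dense minimal points.

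Because $(X,T)$ is weakly mixing, so is $(X^{n},T^{(n)})$, and in particular it is transitive. The main obstacle is that transitivity of $X^{n}$ together with mere density of minimal points in $X$ is \emph{not} enough: one must choose the minimal points $x_i\in U_i$ so that the whole tuple $(x_1,\dots,x_n)$ is \emph{jointly} minimal, equivalently so that a single minimal idempotent fixes all the $x_i$ simultaneously (a common idempotent fixing several prescribed minimal points cannot be found without further input, as the corresponding diagonal orbit need not be minimal). This synchronization is precisely where weak mixing must be used essentially. The plan is to invoke the fact that a weakly mixing $M$-system has dense minimal points in every power $(X^{n},T^{(n)})$: by weak mixing the return-time sets $N(U_i,V_i)$ are thick, and their simultaneous thickness over $i=1,\dots,n$ allows the selection of a common minimal idempotent fixing one point of each $U_i$ (cf.\ \cite{HY05,Fur81}). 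Granting this, $(X^{n},T^{(n)})$ is an $M$-system, the reduction through $\pi_n$ yields a minimal point of $K(X)$ in each basic set $\langle U_1,\dots,U_n\rangle$, and together with $(\Leftarrow)$ this establishes the desired equivalence.
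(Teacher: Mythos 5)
Your proof is correct, and its two halves relate to the paper's proof differently. In the direction ``$(K(X),T_K)$ an $M$-system $\Rightarrow$ $(X,T)$ a weakly mixing $M$-system'' you take a genuinely different route. The paper fixes non-empty open sets $\overline{V}\subset U$, takes a minimal point $C\in\langle V\rangle$, uses syndeticity of $N_{T_K}(C,\langle V\rangle)$ to form the closed set $D=\overline{\bigcup_{n\in F}T_K^nC}\subset\overline{V}\subset U$, and then cites the Blokh--Fieldsteel theorem on sets that force recurrence \cite{BF02} to conclude $D\cap AP(T)\neq\emptyset$. You instead take a minimal point $K_0\in\langle U\rangle$, a minimal idempotent $u\in\beta\mathbb{Z}_+$ with $u\cdot K_0=K_0$, and observe that for $x_0\in K_0$ the point $u\cdot x_0$ is minimal and lies in $u\cdot K_0=K_0\subset U$; your inclusion $\{u\cdot x:x\in K_0\}\subset u\cdot K_0$ is indeed a correct consequence of Hausdorff convergence. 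Your version trades the external citation (and the shrinking $\overline{V}\subset U$, which becomes unnecessary since the minimal point lands inside $K_0$ itself) for standard Auslander--Ellis machinery; the paper's version needs only syndetic return times plus the quoted result.

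In the direction ``$(X,T)$ a weakly mixing $M$-system $\Rightarrow$ $(K(X),T_K)$ an $M$-system'' you follow the paper exactly: reduce to the fact that $K_n(X)$ is a factor of $(X^n,T^{(n)})$, use density of $\bigcup_n K_n(X)$ in $K(X)$, and invoke the statement that every power $(X^n,T^{(n)})$ is an $M$-system. The paper asserts that statement with no proof either, so you are not below its standard; but be warned that your parenthetical justification of it would not survive scrutiny: thick sets do not form a filter, and thickness of the sets $N(U_i,V_i)$ provides no mechanism for producing a single minimal idempotent fixing a point of each $U_i$. The correct reason is in fact independent of weak mixing: if $x_i\in U_i$ are minimal points with minimal orbit closures $Y_i$, then almost periodic points are dense in $Y_1\times\cdots\times Y_n$ (take one minimal subset $M$ of this product; the sets $(\mathrm{id}\times T^{j_2}\times\cdots\times T^{j_n})M$, $j_i\ge 0$, are again minimal, and their union is dense because, the $j_i$ being independent, its closure contains $\{y_1\}\times Y_2\times\cdots\times Y_n$ for every $(y_1,\ldots,y_n)\in M$). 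This yields a minimal point of $X^n$ in $U_1\times\cdots\times U_n$; weak mixing is needed only to make $X^n$ transitive. So either cite the fact as known (it is in the spirit of \cite{HY05}) or prove it along these lines, not via thickness.
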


\begin{proof}
Let $(K(X),T_K)$ be an $M$-system. By Lemma \ref{lem:3-1}, $(X,T)$
is weakly mixing. Now we show that the set of minimal points of
$(X, T)$ is dense. Let $U,V\subset X$ be two non-empty open subsets
of $X$ with $\overline{V}\subset U$. Then $\langle V \rangle=\{A \in
K(X): A \subset V\}$ is a non-empty open subset of $K(X)$. Since
$(K(X), T_K)$ is an $M$-system, there exists a minimal point $C \in
\langle V \rangle$ of $T_K$. It follows that there exists a syndetic
subset $F$ of $\mathbb{Z}_+$ such that $T_K^n(C)\in \langle V
\rangle$, which implies that $T^n_K(C)\subset V$ for all $n\in F$.
Let $D=\overline{\bigcup_{n \in F}T^n_K(C)}$, then $D\subset
\overline{V} \subset U$. By \cite[Theorem 7]{BF02}, $D\cap AP(T)
\not=\emptyset$, so $U\cap AP(T) \not=\emptyset$. That is, $(X,T)$
is an $M$-system.

Now assume $(X, T)$ is a weakly mixing $M$-system. Then the product
system $(X^n, T^{(n)})$ is  an $M$-system for each $n \in
\mathbb{N}$. This implies that the restriction of $T_K$ to
$K_n(X)=\{C \in K(X): |K| \le n\}$, as a factor of $T^{(n)}$,
is also an $M$-system. Notice that $\bigcup_{n=1}^{\infty} K_n(X)$
is dense in $K(X)$. Hence the set of minimal points of $(K(X), T_K)$
is dense in $K(X)$. That is, $(K(X), T_K)$ is an $M$-system.
\end{proof}

\begin{thm} \label{thm:3-6}
$(X,T)$ is a weakly mixing $E$-system if and only if $(K(X),T_K)$ is an $E$-system.
\end{thm}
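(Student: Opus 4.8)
The plan is to mimic the structure of the proof of Theorem \ref{thm:3-5}, treating the two directions separately and replacing the dense families of minimal points by invariant measures of full support. For the easy direction, suppose $(X,T)$ is a weakly mixing $E$-system and fix a $T$-invariant $\mu\in M(X)$ with $\supp(\mu)=X$. Since weak mixing passes to all finite products, each $(X^n,T^{(n)})$ is transitive, and $\mu^{(n)}=\mu\times\cdots\times\mu$ is $T^{(n)}$-invariant with $\supp(\mu^{(n)})=X^n$; hence every $(X^n,T^{(n)})$ is an $E$-system. As in Theorem \ref{thm:3-5}, the map $\phi_n\colon X^n\to K_n(X)$, $(x_1,\dots,x_n)\mapsto\{x_1,\dots,x_n\}$, is a continuous equivariant surjection onto $K_n(X)=\{C\in K(X):|C|\le n\}$, so $(K_n(X),T_K)$ is a factor of $(X^n,T^{(n)})$ and therefore an $E$-system; in particular it carries a $T_K$-invariant $\lambda_n$ with $\supp(\lambda_n)=K_n(X)$, since the pushforward of a full-support measure under a continuous surjection again has full support. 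I would then set $\lambda=\sum_{n\ge1}c_n\lambda_n$ with $c_n>0$ and $\sum_n c_n=1$. This $\lambda$ is $T_K$-invariant and $\supp(\lambda)=\overline{\bigcup_n K_n(X)}=K(X)$, because the finite subsets are dense in $K(X)$. As $(X,T)$ is weakly mixing, Lemma \ref{lem:3-1} gives that $(K(X),T_K)$ is transitive, and together with $\lambda$ this makes it an $E$-system.

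The substantive direction is the converse. Assume $(K(X),T_K)$ is an $E$-system; Lemma \ref{lem:3-1} immediately yields that $(X,T)$ is weakly mixing, so it remains to produce a $T$-invariant measure on $X$ of full support. The device I would use is the incidence relation
$$R=\{(x,C)\in X\times K(X):x\in C\},$$
which is a closed, hence compact, subset of $X\times K(X)$ and is invariant under $T\times T_K$ (if $x\in C$ then $Tx\in TC$). Both coordinate projections $p_1\colon R\to X$ and $p_2\colon R\to K(X)$ are continuous, equivariant and surjective, so $(K(X),T_K)$ is a factor of $(R,T\times T_K)$ via $p_2$.

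Now let $\lambda$ be a $T_K$-invariant measure on $K(X)$ with $\supp(\lambda)=K(X)$. The key step is to lift $\lambda$ through $p_2$ to an invariant measure $\tilde\lambda$ on $R$: choose any $\eta\in M(R)$ with $p_2\eta=\lambda$ (possible since $p_2$ is a continuous surjection) and take a weak$^*$ limit point $\tilde\lambda$ of the Ces\`aro averages $\frac1N\sum_{n=0}^{N-1}(T\times T_K)^n\eta$; then $\tilde\lambda$ is $(T\times T_K)$-invariant and, because $\lambda$ is $T_K$-invariant, still satisfies $p_2\tilde\lambda=\lambda$. Setting $\mu=p_1\tilde\lambda$ gives a $T$-invariant measure on $X$. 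To verify $\supp(\mu)=X$, take any non-empty open $U\subset X$ and note the crucial inclusion $p_2^{-1}(\langle U\rangle)\subset p_1^{-1}(U)$ inside $R$: if $(x,C)\in R$ and $C\subset U$ then $x\in C\subset U$. Hence
$$\mu(U)=\tilde\lambda\big(p_1^{-1}(U)\big)\ge\tilde\lambda\big(p_2^{-1}(\langle U\rangle)\big)=\lambda(\langle U\rangle)>0,$$
the last inequality because $\langle U\rangle$ is a non-empty open subset of $K(X)$ and $\lambda$ has full support. As $U$ is arbitrary, $\supp(\mu)=X$, so $(X,T)$ is an $E$-system.

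The main obstacle is exactly this backward direction: there is no continuous map $K(X)\to X$ along which to push $\lambda$, and a naive lift of $\lambda$ to $R$ could concentrate the $x$-coordinate away from a given $U$ even while $C$ meets $U$. What rescues the argument is the choice to test against $\langle U\rangle=\{C:C\subset U\}$ rather than against $\{C:C\cap U\neq\emptyset\}$, since only the former forces $x\in U$; this is the one genuinely nontrivial point, while the existence of the invariant lift $\tilde\lambda$ and the invariance of $\mu$ are standard.
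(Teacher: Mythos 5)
Your proof is correct, and its substantive (backward) direction takes a genuinely different route from the paper's. For the forward implication the paper simply invokes \cite[Proposition 5]{BS75} together with Lemma \ref{lem:3-1}; your direct construction (push $\mu^{(n)}$ forward to $K_n(X)$ under $(x_1,\dots,x_n)\mapsto\{x_1,\dots,x_n\}$ and take a convex sum) is exactly the device the paper itself uses for $(M(X),T_M)$ in Theorem \ref{thm:4-4}, so that part is in the paper's spirit. For the converse, the paper argues by contradiction: if $(X,T)$ is not an $E$-system it has a non-trivial uniquely ergodic factor $(Y,S)$ with a fixed point $p$; then $(K(Y),S_K)$, being a factor of $(K(X),T_K)$, carries an invariant measure of full support, ergodic decomposition yields an ergodic $\nu$ with $\nu(\langle U\rangle)>0$ for some open $U$ with $p\notin\overline{U}$, and a generic point $C$ for $\nu$ returns to $\langle U\rangle$ with positive upper density, so any $x\in C$ visits $U$ with positive upper density --- contradicting $\frac1n\sum_{i=0}^{n-1}\delta_{S^ix}\to\delta_p$. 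Your route is direct: lift the full-support invariant measure $\lambda$ on $K(X)$ to the incidence relation $R=\{(x,C):x\in C\}$ by Ces\`aro averaging any preimage measure, push down to $X$, and exploit the inclusion $p_2^{-1}(\langle U\rangle)\subset p_1^{-1}(U)$. What your argument buys is self-containedness: it avoids the structural fact about non-$E$-systems (existence of a uniquely ergodic factor with a fixed point, which the paper uses without proof), avoids ergodic decomposition and generic-point/upper-density estimates, and its only nonelementary ingredient is the standard fact that the pushforward map $M(R)\to M(K(X))$ induced by a continuous surjection of compact metric spaces is onto (its range is weak$^*$-compact, convex, and contains all point masses). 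What the paper's argument buys is that it never needs to lift measures at all, working entirely with measures that are already given on hyperspaces of factors. Both proofs are valid; yours is arguably the cleaner one for the statement as written.
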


\begin{proof}
By \cite[Proposition 5]{BS75} and Lemma \ref{lem:3-1}, it remains to show that $(K(X),T_K)$ is an $E$-system implies that
$(X,T)$ is an $E$-system.  Assume that $(X,T)$ is not an $E$-system,
then there is a non-trivial factor $(Y,S)$ of $(X,T)$ such that
$(Y,S)$ is uniquely ergodic with a fixed point $p$.

Let $\nu'$ be an invariant probability measure on $K(Y)$ with full
support. Assume $U \subset Y$  is non-empty open and $p\not \in
\overline{U}$. Since $\langle U \rangle$ is a non-empty open subset
of $K(Y)$, there is an ergodic measure (using ergodic decomposition)
$\nu$ on $K(Y)$ with $\nu (\langle U\rangle)>0$. Let $C \in \langle
U \rangle$ be a generic point for $\nu$ (i.e.,
$\frac{1}{n}\sum_{i=0}^{n-1} \delta_{T_K^iC} \rightarrow \nu$), then
the return time set $N(C, \langle U \rangle)=\{n\in\Z:T^nC\subset
U\}$ has positive upper density. This implies that each point $x \in
C$ returns to $U$ with positive upper density. Fix $x \in C$. Since
$(Y,S)$ is uniquely ergodic, $\frac{1}{n}\sum_{i=0}^{n-1}
\delta_{T^ix}\lra \delta_p$ and thus by Lemma \ref{lem:2-1} we have
$$0=\delta_p(\overline{U}) \ge \limsup_{n \rightarrow \infty}
\frac{1}{n}\sum_{i=0}^{n-1} \delta_{T^ix} (\overline{U})=D^*(N(x,
\overline{U}))>0,$$  a contradiction. This shows that $(X,T)$ is an
$E$-system.
\end{proof}

\section{Dynamic properties on the space of probability measures}

In this section, we will study some dynamic properties of $(M(X), T_M)$.

\subsection{Distal points and minimal points} In \cite{BS75}, the authors showed the distality (resp. minimality)
of $(X, T)$ does not necessarily imply the distality (resp. minimality) of $(M(X), T_M)$. But we have the following result.

\begin{thm} \label{thm:4-1}
If $(X, T)$ is distal, then the set of distal points of $(M(X),
T_M)$ is dense in $M(X)$. If $(X, T)$ is an $M$-system,  then the
set of minimal points of $(M(X), T_M)$ is dense.
\end{thm}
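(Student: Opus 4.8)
The plan is to treat the two assertions separately, in each case reducing to the dense set $\bigcup_{n}M_n(X)$ of finitely supported uniform measures (Lemma \ref{lem:2-3}).

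For the distal part, I would show that \emph{every} measure $\mu=\frac1n\sum_{i=1}^n\delta_{x_i}\in M_n(X)$ is already a distal point of $(M(X),T_M)$ when $(X,T)$ is distal; since these measures are dense, this gives the conclusion. By the characterisation recalled in Section 2, a point is distal iff it is IP$^*$-recurrent, so it suffices to check $N(\mu,W)\in\mathcal F_{ip}^*$ for every basic neighbourhood $W=V_\mu(f_1,\dots,f_k;\ep)$. Given the $f_j$ and $\ep$, uniform continuity yields $\delta>0$ with $\rho(a,b)<\delta\Rightarrow|f_j(a)-f_j(b)|<\ep$ for all $j$. Each $x_i$ is distal, hence IP$^*$-recurrent, so $N(x_i,B(x_i,\delta))\in\mathcal F_{ip}^*$; because $\mathcal F_{ip}^*$ is a filter, the finite intersection $\bigcap_{i=1}^nN(x_i,B(x_i,\delta))$ again lies in $\mathcal F_{ip}^*$. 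For any $m$ in this intersection one has $T^mx_i\in B(x_i,\delta)$ for all $i$, whence $|\int f_j\,\mathrm{d}(T_M^m\mu)-\int f_j\,\mathrm{d}\mu|\le\frac1n\sum_i|f_j(T^mx_i)-f_j(x_i)|<\ep$, so $m\in N(\mu,W)$. Thus $N(\mu,W)$ contains a member of $\mathcal F_{ip}^*$ and, as IP$^*$-sets are upward closed, $N(\mu,W)\in\mathcal F_{ip}^*$. This settles the first statement.

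For the minimal part the same reduction says it is enough to approximate an arbitrary $\frac1n\sum_{i}\delta_{x_i}$ by a minimal point of $(M(X),T_M)$. The basic mechanism is the factor map $q_n\colon (X^n,T^{(n)})\to (M_n(X),T_M)$, $q_n(x_1,\dots,x_n)=\frac1n\sum_i\delta_{x_i}$: since factor maps send minimal points to minimal points, $q_n$ carries $AP(T^{(n)})$ into the minimal points of $(M(X),T_M)$. A concrete supply of such points comes from a single minimal orbit: if $x_0\in AP(T)$ with $Y_0=\overline{\mathrm{orb}(x_0,T)}$, then for any exponents $a_1,\dots,a_n$ the tuple $(T^{a_1}x_0,\dots,T^{a_n}x_0)$ is minimal in $X^n$, its orbit closure being the continuous equivariant image of $Y_0$ under $y\mapsto(T^{a_1}y,\dots,T^{a_n}y)$; hence $\frac1n\sum_j\delta_{T^{a_j}x_0}$ is a minimal point of $M(X)$. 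As the orbit of $x_0$ is dense in $Y_0$, such measures are dense in $M(Y_0)$, and so the minimal points of $(M(X),T_M)$ are dense in $M(Y)$ for every minimal set $Y\subseteq X$.

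The remaining, and hardest, point is to pass from single minimal sets to measures distributing mass over several of them. Using the density of $AP(T)$ I would first replace each $x_i$ by a nearby minimal point, so the task becomes: given minimal points $x_1,\dots,x_n$ lying in possibly different minimal sets, find a \emph{minimal} point of $X^n$ close to $(x_1,\dots,x_n)$, for then its $q_n$-image is a minimal measure close to $\frac1n\sum_i\delta_{x_i}$. This is a simultaneous uniform-recurrence requirement, equivalent to the return-time set $\bigcap_iN(x_i,U_i)$ being syndetic, and it is the genuine obstacle: a tuple of minimal points need be neither recurrent nor minimal, and the naive device of applying a minimal idempotent $u$ to obtain the minimal tuple $(ux_1,\dots,ux_n)$ only produces a point \emph{proximal} to $(x_1,\dots,x_n)$, not a close one. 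I expect the crux to be the construction of minimal tuples with coordinates prescribed up to $\ep$, handled through the minimal subsets of $\overline{\mathrm{orb}(x_1,\dots,x_n)}\subseteq X^n$ (which project onto each factor) together with the transitivity of the $M$-system; controlling closeness there, rather than mere proximality, is where the real work lies.
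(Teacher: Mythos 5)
Your treatment of the distal half is correct, and it takes a genuinely different route from the paper: the paper notes that $(M_n(X),T_M)$ is a factor of the distal product system $(X^n,T^{(n)})$, hence distal, hence all of its points are distal points of $(M(X),T_M)$; you instead verify IP$^*$-recurrence of each $\mu\in M_n(X)$ by hand, using the filter property of $\mathcal{F}_{ip}^*$ and uniform continuity. Both are sound; the paper's argument is shorter, yours is more self-contained (it re-proves, in effect, that factors and finite products of distal systems are distal in the one case needed).

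The $M$-system half, however, has a genuine gap, and you flagged it yourself: you never produce a minimal point of $(X^n,T^{(n)})$ \emph{close} to a prescribed tuple $(x_1,\ldots,x_n)$ of minimal points. This is exactly the fact the paper's proof rests on — it asserts, as a known classical fact, that an $M$-system $(X,T)$ has $(X^n,T^{(n)})$ with dense minimal points, and then pushes that density through the factor map onto $M_n(X)$. Without it, your reduction stops precisely where the paper's proof starts. What is frustrating is that you already hold the tool that closes the gap, namely the commuting-translation trick you used on a single orbit; it just has to be applied with \emph{different} powers in different coordinates. Let $Y_i=\overline{\mathrm{orb}(x_i,T)}$, pick any minimal subset $M\subseteq Y_1\times\cdots\times Y_n$ of $T^{(n)}$ (it exists by Zorn's lemma), and let $(a_1,\ldots,a_n)\in M$. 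For any $k_1,\ldots,k_n\in\mathbb{Z}_+$ the map $T^{k_1}\times\cdots\times T^{k_n}$ commutes with $T^{(n)}$, so $(T^{k_1}\times\cdots\times T^{k_n})(M)$ is a continuous equivariant image of a minimal system and hence minimal; therefore $(T^{k_1}a_1,\ldots,T^{k_n}a_n)$ is a minimal point of $X^n$. As the $k_i$ range independently, these points form the product set $\mathrm{orb}(a_1,T)\times\cdots\times\mathrm{orb}(a_n,T)$, whose closure is $\overline{\mathrm{orb}(a_1,T)}\times\cdots\times\overline{\mathrm{orb}(a_n,T)}=Y_1\times\cdots\times Y_n$, since each $a_i\in Y_i$ and $Y_i$ is minimal. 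So minimal points of $X^n$ are dense in $Y_1\times\cdots\times Y_n$, in particular some lie within $\varepsilon$ of $(x_1,\ldots,x_n)$; no analysis of $\overline{\mathrm{orb}((x_1,\ldots,x_n),T^{(n)})}$, no syndeticity of $\bigcap_i N(x_i,U_i)$, and no transitivity of $(X,T)$ is needed. Inserting this argument, your proof becomes complete and is then essentially the paper's proof.
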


\begin{proof}
Assume that $(X, T)$ is a distal system. Then the product system
$(X^n, T^{(n)})$ is distal for every $n \in \mathbb{N}$.  This
implies that the restriction of $T_M$ to $M_n(X)$, as a factor of
$T^{(n)}$, is also distal. Notice that each $\mu \in M_n(X)$ is a
distal point of $(M(X), T_M)$. Hence the set of distal points of
$(M(X), T_M)$ is dense in $M(X)$.

Now assume $(X, T)$ is an $M$-system. Then the set of minimal points
of the product system $(X^n, T^{(n)})$  is dense for each $n \in
\mathbb{N}$. This implies that the restriction of $T_M$ to $M_n(X)$,
as a factor of $T^{(n)}$, is also has dense minimal points. Notice
that $\bigcup_{n=1}^{\infty} M_n(X)$ is dense in $M(X)$. Hence the
set of minimal points of $(M(X), T_M)$ is dense in $M(X)$.
\end{proof}

\subsection{Weakly mixing} In this subsection, we study the weakly mixing property of
$(M(X), T_M)$. Firstly, we recall the following useful lemma.

\begin{lem} \label{lem:4-2}
\cite{Pet70} $(X, T)$ is weakly mixing if and only if $N(U, U) \cap N(U, V) \neq \emptyset$ for any non-empty open subsets $U, V \subset X$.
\end{lem}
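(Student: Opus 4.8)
The plan is to treat the two implications separately; the forward one is immediate, while the reverse one is a bootstrapping argument whose engine is the surjectivity of $T$. For the forward implication I would simply unravel the product system. Weak mixing means $(X\times X,T\times T)$ is transitive, and for nonempty open $U,V\subseteq X$ one computes $(T\times T)^{-n}(U\times V)\cap(U\times U)=(T^{-n}U\cap U)\times(T^{-n}V\cap U)$, which is nonempty precisely when $n\in N(U,U)\cap N(U,V)$. Applying transitivity of the product to the nonempty open sets $U\times U$ and $U\times V$ then produces such an $n$.

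For the reverse implication I assume $N(U,U)\cap N(U,V)\neq\emptyset$ for all nonempty open $U,V$ and aim to show that $(X\times X,T\times T)$ is transitive, i.e. $N(U_1,V_1)\cap N(U_2,V_2)\neq\emptyset$ for all nonempty open $U_1,U_2,V_1,V_2$. Since $N(U,U)\cap N(U,V)\subseteq N(U,V)$, the hypothesis already gives that $(X,T)$ is transitive. The crucial step is to strengthen the hypothesis to a common source with two \emph{arbitrary} targets: for all nonempty open $U,V,W$, $N(U,V)\cap N(U,W)\neq\emptyset$. To see this, choose $a\in N(U,V)$ by transitivity and put $U_0=U\cap T^{-a}V$, a nonempty open set contained in $T^{-a}V$. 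As $T$ is surjective, $T^{-a}W$ is nonempty and open, so the hypothesis applied to the pair $(U_0,T^{-a}W)$ yields $m\in N(U_0,U_0)\cap N(U_0,T^{-a}W)$. The first membership gives $x\in U_0$ with $T^mx\in U_0\subseteq T^{-a}V$, whence $T^{a+m}x\in V$; the second gives $y\in U_0$ with $T^{a+m}y\in W$. Since $x,y\in U$, we get $a+m\in N(U,V)\cap N(U,W)$.

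It remains to allow different sources, which I do by the same pull-back device. Given $U_1,U_2,V_1,V_2$, transitivity provides $s\in N(U_1,U_2)$; set $P=U_1\cap T^{-s}U_2$, a nonempty open subset of $U_1$ with $T^sP\subseteq U_2$. Applying the common-source step to the source $P$ and the two targets $V_1$ and $T^{-s}V_2$ (nonempty open by surjectivity) gives $n\in N(P,V_1)\cap N(P,T^{-s}V_2)$. A witness in $P$ for the first membership lies in $U_1$ and lands in $V_1$, so $n\in N(U_1,V_1)$; a witness $z\in P$ for the second satisfies $T^sz\in U_2$ and $T^{n+s}z\in V_2$, so $n\in N(U_2,V_2)$. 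Thus $n\in N(U_1,V_1)\cap N(U_2,V_2)$, and $(X\times X,T\times T)$ is transitive.

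The main obstacle is exactly the asymmetry of the hypothesis: both source sets are forced to be the same $U$, so a priori the condition only constrains the behaviour of the product near the diagonal. The idea that overcomes this is to use the surjectivity of $T$ to pull the targets backwards --- replacing $W$ by $T^{-a}W$ and $V_2$ by $T^{-s}V_2$ --- so that genuinely different sources and targets are realized at one common return time over a single shrunken source set. Finally, the transitivity produced above is in the sense that $N$ is nonempty on all pairs of nonempty open sets (it suffices to check this on the product basis of $X\times X$); on a nontrivial, hence perfect, space this upgrades to the infinite-return-time transitivity of the definitions, the one-point system being trivially weakly mixing.
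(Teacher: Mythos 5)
The paper itself offers no argument for this lemma --- it is simply quoted from Petersen's paper \cite{Pet70} --- so there is no internal proof to compare yours against; what you have written is a self-contained proof, and its core is correct. The forward direction is the right one-line computation with the product basis. In the reverse direction, your two-stage bootstrap --- first upgrading the common-source hypothesis $N(U,U)\cap N(U,V)\neq\emptyset$ to $N(U,V)\cap N(U,W)\neq\emptyset$ for two arbitrary targets by passing to $U_0=U\cap T^{-a}V$ and the pulled-back target $T^{-a}W$, then allowing distinct sources via $P=U_1\cap T^{-s}U_2$ and the target $T^{-s}V_2$ --- is exactly the classical argument behind Petersen's result, and your appeals to surjectivity (to know $T^{-a}W$ and $T^{-s}V_2$ are nonempty) are legitimate here because the paper's standing convention is that $T$ is a continuous \emph{surjection}. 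Checking product transitivity on the basis of open rectangles is also fine, since $N_{T\times T}(U_1\times U_2,\,V_1\times V_2)=N(U_1,V_1)\cap N(U_2,V_2)$.

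The one place where you lean on unproved assertions is the closing sentence. The paper defines transitivity by the return-time sets being \emph{infinite}, whereas your argument produces only nonemptiness, so the upgrade genuinely needs justification, and ``nontrivial, hence perfect'' deserves its own line: if $p$ were isolated with $|X|\ge 2$, apply the hypothesis to $U=\{p\}$; if $p$ is not periodic then $N(U,U)=\{0\}$ (recall $0\in\mathbb{Z}_+$ here), forcing $p\in V$ for every nonempty open $V$, which is absurd; if $p$ is periodic, its orbit is finite and (by transitivity applied to $U=\{p\}$) dense, so $X$ is a single periodic orbit of some period $k$, and for $k\ge 2$ the choice $U=\{p\}$, $V=\{Tp\}$ gives $N(U,U)=k\mathbb{Z}_+$ and $N(U,V)=1+k\mathbb{Z}_+$, which are disjoint, contradicting the hypothesis. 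Thus a nontrivial $X$ satisfying the hypothesis is perfect, hence so is $X\times X$, and the standard Baire-category fact (a system with all $N(A,B)\neq\emptyset$ on a perfect space has a point every tail of whose orbit is dense, so all return-time sets are infinite) completes the proof. These facts are standard, but since the paper's definition of transitivity forces the issue, a careful write-up should include them rather than assert them.
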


The following fact is known, see for example \cite{Shao}. We give a
proof for completeness.

\begin{thm} \label{thm:4-3}
Let $(X, T)$ be a t.d.s.. Then the following statements are equivalent:
\begin{enumerate}
  \item $(X, T)$ is weakly mixing;
  \item $(M(X), T_M)$ is weakly mixing;
  \item $(M(X), T_M)$ is transitive.
\end{enumerate}
\end{thm}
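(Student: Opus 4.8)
The plan is to prove the cycle $(1)\Rightarrow(2)\Rightarrow(3)\Rightarrow(1)$. The implication $(2)\Rightarrow(3)$ is immediate, since a weakly mixing system is transitive. For $(1)\Rightarrow(2)$ I would mirror the factor argument already used in Theorems \ref{thm:3-5} and \ref{thm:4-1}: weak mixing of $(X,T)$ forces weak mixing, hence transitivity, of every product $(X^n,T^{(n)})$, and the averaging map $(x_1,\dots,x_n)\mapsto\frac1n\sum_{i=1}^n\delta_{x_i}$ is a factor map from $(X^n,T^{(n)})$ onto $(M_n(X),T_M)$; therefore each $(M_n(X),T_M)$ is weakly mixing. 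Since $M_n(X)\subseteq M_{kn}(X)$ (duplicate each atom $k$ times) and $\bigcup_n M_n(X)$ is dense by Lemma \ref{lem:2-3}, any finite collection of nonempty open sets of $M(X)$ all meet a common $M_N(X)$; transitivity of $M_N(X)\times M_N(X)$ then supplies the simultaneous return, so $(M(X),T_M)\times(M(X),T_M)$ is transitive and $(M(X),T_M)$ is weakly mixing.

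The heart of the matter is $(3)\Rightarrow(1)$. Here I would invoke the criterion of Lemma \ref{lem:4-2}: it suffices to show $N(U,U)\cap N(U,V)\neq\emptyset$ for arbitrary nonempty open $U,V\subset X$. The naive attempt, transporting a balanced two-atom measure $\frac12(\delta_a+\delta_b)$ to another two-atom measure, fails: a mass-counting argument cannot force the part over $U$ to travel to the target piece over $U$ rather than to the piece over $V$, and one checks that the threshold inequalities needed to pin down both pairings are jointly unsatisfiable. This matching problem is the main obstacle, and the decisive idea is to break the symmetry by \emph{concentrating} the source measure while \emph{spreading} the target, so that mass conservation forces the intersections one wants.

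Concretely, fix $a\in U$ and choose $c\in U$, $d\in V$ together with disjoint open sets $W_1\ni c$ with $W_1\subseteq U$ and $W_2\ni d$ with $W_2\subseteq V$. Since $\lambda\mapsto\lambda(O)$ is lower semicontinuous for open $O$ by Lemma \ref{lem:2-1}(3), the sets
$$\mathcal{U}=\{\lambda\in M(X):\lambda(U)>\tfrac23\},\qquad \mathcal{V}=\{\lambda\in M(X):\lambda(W_1)>\tfrac13,\ \lambda(W_2)>\tfrac13\}$$
are nonempty and open, with $\delta_a\in\mathcal{U}$ and $\frac12(\delta_c+\delta_d)\in\mathcal{V}$. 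Transitivity of $(M(X),T_M)$ then yields $n\in\Z$ and $\lambda\in\mathcal{U}$ with $T_M^n\lambda\in\mathcal{V}$, that is, $\lambda(U)>\frac23$, $\lambda(T^{-n}W_1)>\frac13$ and $\lambda(T^{-n}W_2)>\frac13$. Because $\lambda(U)+\lambda(T^{-n}W_i)>1$ for $i=1,2$, the elementary fact $\lambda(A)+\lambda(B)>1\Rightarrow A\cap B\neq\emptyset$ gives $U\cap T^{-n}W_1\neq\emptyset$ and $U\cap T^{-n}W_2\neq\emptyset$; as $W_1\subseteq U$ and $W_2\subseteq V$, this is exactly $n\in N(U,U)\cap N(U,V)$, and Lemma \ref{lem:4-2} yields weak mixing of $(X,T)$. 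The only routine points are the openness of $\mathcal{U},\mathcal{V}$ and the mass inequality; the conceptual crux is the concentrate-source/spread-target asymmetry, which bypasses the matching obstruction entirely.
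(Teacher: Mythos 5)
Your proof is correct, and the crucial implication $(3)\Rightarrow(1)$ is essentially the paper's own argument: the paper takes $W_1=\{\mu:\mu(U)>\tfrac45\}$ and $W_2=\{\mu:\mu(U)>\tfrac25,\ \mu(V)>\tfrac25\}$ and applies exactly the same mass-pigeonhole followed by Lemma \ref{lem:4-2}, so your ``concentrate the source, spread the target'' device with thresholds $\tfrac23,\tfrac13$ is the identical idea, and your auxiliary sets $W_1\subseteq U$, $W_2\subseteq V$ (and their disjointness) are inessential --- the thresholds can be applied to $U$ and $V$ directly. The only real divergence is $(1)\Rightarrow(2)$: the paper simply cites \cite[Theorem 1]{BS75}, whereas you reprove it via the factor maps $(X^n,T^{(n)})\to(M_n(X),T_M)$, the nesting $M_n(X)\subseteq M_{kn}(X)$, and the density given by Lemma \ref{lem:2-3}; this argument is sound (weak mixing passes to finite products and to factors, and a common $M_N(X)$ meets any four prescribed open sets at once), and it is precisely the technique the paper itself deploys in Theorems \ref{thm:3-5}, \ref{thm:4-1} and \ref{thm:4-4}, so your version has the modest advantage of being self-contained.
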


\begin{proof}
(1) $\Rightarrow$ (2) can see \cite[Theorem 1]{BS75}; and (2) $\Rightarrow$ (3) is obvious.

\medskip

(3) $\Rightarrow$ (1) Let $U, V$ be two non-empty open subsets of $X$ and let $W_1=\{\mu \in M(X):
\mu(U)>\frac{4}{5}\}$ and $W_2=\{\mu \in M(X): \mu(U)>\frac{2}{5} \mbox{ and } \mu(V)>\frac{2}{5}\}$.
It is clear that $W_i$ is non-empty open in $M(X)$ for $i=1, 2$.

By the transitivity of $T_M$, there is $k \in \mathbb{N}$ such that
$W_1 \cap T_M^{-k} W_2 \neq \emptyset$.  Let $\mu \in W_1 \cap
T_M^{-k} W_2$. Then we have $\mu \in W_1$ and $T^k \mu \in W_2$,
which implies that $U \cap T^{-k}U \neq \emptyset$ and $U \cap
T^{-k}V \neq \emptyset$. By Lemma \ref{lem:4-2}, $(X, T)$ is weakly
mixing.
\end{proof}

\subsection{$E$-system} In this subsection, we will study the characterization of $E$-systems on the space
of probability measures. That is, we have the following result.

\begin{thm} \label{thm:4-4}
$(X,T)$ is a weakly mixing $E$-system if and only if $(M(X),T_M)$ is an $E$-system.
\end{thm}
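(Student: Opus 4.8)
The plan is to prove the two implications of Theorem \ref{thm:4-4} separately, using the structure of the analogous hyperspace result (Theorem \ref{thm:3-6}) as a template while accounting for the extra convex structure of $M(X)$. By Theorem \ref{thm:4-3}, weak mixing of $(X,T)$ is equivalent to weak mixing (hence transitivity) of $(M(X),T_M)$, so the real content is matching up the $E$-system property, i.e.\ the existence of a fully supported invariant measure.

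\textbf{The forward direction.} Suppose $(X,T)$ is a weakly mixing $E$-system. Then by Theorem \ref{thm:4-3}, $(M(X),T_M)$ is transitive, so it remains to produce a $T_M$-invariant Borel probability measure on $M(X)$ with full support. Here I would exploit the convex structure: let $\mu_0 \in M(X,T)$ be an invariant measure of full support on $X$ (which exists since $(X,T)$ is an $E$-system). The idea is to push forward measures on $X$ (or products of $X$) to measures on $M(X)$ via the barycentric/empirical embeddings. Concretely, the embedding $x \mapsto \delta_x$ carries $\mu_0$ to a $T_M$-invariant measure $\widetilde{\mu_0} \in M(M(X))$ supported on the Dirac measures. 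Since $\mathrm{supp}(\mu_0)=X$, this gives full support on the copy of $X$ inside $M(X)$ but not on all of $M(X)$. To fill out the rest of $M(X)$, I would use the maps $\Phi_n : X^n \to M_n(X)$, $\Phi_n(x_1,\dots,x_n)=\frac{1}{n}\sum_{i=1}^n \delta_{x_i}$, together with the product measures $\mu_0^{(n)}$ on $X^n$, which are $T^{(n)}$-invariant. Each $\Phi_n$ is a factor map (it intertwines $T^{(n)}$ and $T_M$), so $(\Phi_n)_*\mu_0^{(n)}$ is a $T_M$-invariant measure on $M(X)$ supported on $M_n(X)$ with full support there. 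Averaging these over $n$ with summable weights, say $\nu=\sum_{n=1}^\infty 2^{-n}(\Phi_n)_*\mu_0^{(n)}$, yields a single $T_M$-invariant measure whose support contains $\bigcup_n M_n(X)$, which is dense in $M(X)$ by Lemma \ref{lem:2-3}. Hence $\mathrm{supp}(\nu)=M(X)$ and $(M(X),T_M)$ is an $E$-system.

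\textbf{The reverse direction.} Suppose $(M(X),T_M)$ is an $E$-system. Weak mixing of $(X,T)$ is immediate from Theorem \ref{thm:4-3}, so I only need that $(X,T)$ is an $E$-system. For this I would imitate the argument in the proof of Theorem \ref{thm:3-6}: argue by contradiction, assuming $(X,T)$ is \emph{not} an $E$-system. Then there is a non-trivial factor $\pi:(X,T)\to(Y,S)$ that is uniquely ergodic with the unique invariant measure supported on a fixed point $p$. The induced map $\pi_M:(M(X),T_M)\to(M(Y),S_M)$ is a factor map, and $(M(Y),S_M)$ inherits an $E$-system structure; but in $M(Y)$, uniqueness of the invariant measure $\delta_p$ on $Y$ forces every orbit-average to converge to $\delta_p$, which should contradict the existence of a fully supported invariant measure on $M(Y)$ whose support meets the open set $\{\mu\in M(Y):\mu(\overline{U})<\tfrac12\}$ for $U\subset Y$ open with $p\notin\overline U$. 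The positive-upper-density return-time estimate and Lemma \ref{lem:2-1} would be deployed exactly as in Theorem \ref{thm:3-6} to derive $D^*(N(x,\overline U))=0$ for the relevant points while transitivity forces it positive.

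The main obstacle I anticipate is in the forward direction: verifying cleanly that $\bigcup_n M_n(X)$ genuinely lies in the support of the averaged measure $\nu$, i.e.\ that $(\Phi_n)_*\mu_0^{(n)}$ assigns positive mass to every Vietoris-type basic open neighborhood in $M_n(X)$. This reduces to showing that $\mu_0^{(n)}$ charges the preimage under $\Phi_n$ of such a neighborhood, which follows from $\mathrm{supp}(\mu_0)=X$ together with continuity of $\Phi_n$ and the explicit weak$^*$ basis described before Lemma \ref{lem:2-1}; Lemma \ref{lem:2-2} may be useful for controlling how conditional measures approximate points of $M_n(X)$. The reverse direction is more routine once the factor $(M(Y),S_M)$ is identified, with the only care needed being that an $E$-system property passes to factors, which is standard.
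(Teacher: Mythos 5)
Your forward direction is essentially the paper's own argument: the paper also observes that $T^{(n)}$ admits a fully supported invariant measure, transports it to $M_n(X)$ via the factor map from $(X^n,T^{(n)})$ (your $\Phi_n$), and takes the weighted sum $\sum_n 2^{-n}\mu_n$, invoking density of $\bigcup_n M_n(X)$ (Lemma \ref{lem:2-3}). The ``main obstacle'' you anticipate is not an obstacle: if $W$ is a non-empty relatively open subset of $M_n(X)$, then $\Phi_n^{-1}(W)$ is non-empty and open in $X^n$ by continuity and surjectivity of $\Phi_n$, so $(\Phi_n)_*\mu_0^{(n)}(W)=\mu_0^{(n)}(\Phi_n^{-1}(W))>0$ because $\mu_0^{(n)}$ has full support. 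No appeal to Lemma \ref{lem:2-2} or to the explicit weak$^*$ basis is needed.

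The reverse direction is where you genuinely depart from the paper, and where your sketch has a gap. The paper's proof is direct and very short: if $\nu\in M(M(X))$ is $T_M$-invariant with full support, its barycenter $\mu=\int_{M(X)}\theta\,\mathrm{d}\nu(\theta)$ is a $T$-invariant measure on $X$, and for any non-empty open $U\subset X$ the set $V=\{m\in M(X): m(U)>\frac{1}{2}\}$ is non-empty and open, whence $\mu(U)\ge \int_V \theta(U)\,\mathrm{d}\nu(\theta)\ge \frac{1}{2}\nu(V)>0$; so $\mathrm{supp}(\mu)=X$. No contradiction argument and no uniquely ergodic factor are needed. (You mention barycentric embeddings in the forward direction, but miss that the barycenter map is exactly the right tool here.)

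Your proposed contradiction route, copied from Theorem \ref{thm:3-6}, has two concrete problems. First, the open set you write down, $\{\mu\in M(Y):\mu(\overline{U})<\frac{1}{2}\}$, is the wrong one: it contains the fixed point $\delta_p$, so an invariant measure charging it yields no contradiction; the relevant set is $\{\mu\in M(Y):\mu(U)>\frac{1}{2}\}$, consisting of measures concentrated away from $p$. Second, and more seriously, the estimates cannot be deployed ``exactly as'' in Theorem \ref{thm:3-6}. The hyperspace proof hinges on the fact that $T_K^nC\in\langle U\rangle$ means $T^nC\subset U$, so \emph{every single point} $x\in C$ enters $U$ at time $n$; fixing one such $x$ immediately gives $D^*(N(x,\overline{U}))>0$, contradicting unique ergodicity. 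In $M(Y)$ there is no such point: $S_M^n\theta(U)>\frac{1}{2}$ only says that half of the $\theta$-mass lies in $S^{-n}U$, and which half may vary with $n$, so no individual point need return along those times. To finish, one must integrate: from $\limsup_N \frac{1}{N}\sum_{n<N}\theta(S^{-n}U)\ge \frac{1}{2}D^*(N(\theta,\{\mu:\mu(U)>\frac{1}{2}\}))>0$ and reverse Fatou, extract a point $y$ with $D^*(N(y,U))>0$, and only then apply Lemma \ref{lem:2-1} and unique ergodicity. That extraction step is exactly what does not transfer verbatim from the hyperspace setting and is missing from your sketch. With it (and the corrected open set) your route can be completed, but the paper's barycenter argument is both shorter and avoids relying on the unproved claim that a transitive non-$E$-system has a nontrivial uniquely ergodic factor with a fixed point.
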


\begin{proof}
Assume that $(X,T)$ is a weakly mixing $E$-system. It remains to
show that there is a $T_M$-invariant measure $\mu \in M(M(X))$ with
full support. It is easy to see that $T^{(n)}$ admits an invariant
measure with full support. This implies that the restriction of
$T_M$ to $M_n(X)$, as a factor of $T^{(n)}$, admits an invariant
measure $\mu_n$ with full support on $M_n(X)$. Since
$\bigcup_{n=1}^{\infty} M_n(X)$ is dense in $M(X)$, the
$T_M$-invariant measure $\mu=\sum_{n=1}^{\infty} \frac{1}{2^n} \mu_n
\in M(M(X))$ with $\mathrm{supp}(\mu)=M(X)$. That is, $(M(X), T_M)$
is an $E$-system.

Now assume that $(M(X), T_M)$ is an $E$-system. By Theorem
\ref{thm:4-3}, $(X, T)$ is weakly mixing.  Let $\nu$ be a
$T_M$-invariant measure on $M(X)$ with full support. Then the
barycenter $\mu=\int_{M(X)} \, \theta \, \mathrm{d}\nu(\theta)$ is a
$T$-invariant measure on $X$. Let $U$ be a non-empty subset of $X$
and let $V=\{m \in M(X): m(U)
>\frac{1}{2}\}$. Then we have
$$\mu(U) \ge \int_V \, \theta(U) \, \mathrm{d}\nu(\theta) \ge \frac{1}{2} \nu(V)>0.$$
This implies $\mathrm{supp}(\mu)=X$.
\end{proof}

\subsection{$P$-system} Note that for each $x \in X$, $\delta_x$ is a periodic point of $(M(X), T_M)$ which
implies that $x$ is a periodic point of $(X, T)$. Hence, if $(M(X), T_M)$ is pointwise periodic,
then $(X, T)$ is also pointwise periodic. However, the following example shows that the converse is not true.

\begin{ex} \label{ex:4-5}
Let $X=\{0\} \cup \left\{\frac{1}{n}: n \in \mathbb{N} \right\}$ with the subspace topology of the real line $\mathbb{R}$. Define $T: X \rightarrow X$ as
\begin{itemize}
 \item $T(0)=0$ and $T(1)=1$;

 \item $T\left(\frac{1}{2^n}\right)=\frac{1}{2^n+1}, \ldots, T\left(\frac{1}{2^{n+1}-1}\right)=\frac{1}{2^n}$ for each $n \in \mathbb{N}$.
\end{itemize}
Note that $(X, T)$ is pointwise periodic. Let $\mu=\sum_{n=1}^{\infty}\frac{1}{2^n}\delta_{\frac{1}{2^{n-1}}}$. Then $\mu \in M(X)$ is not periodic.
\end{ex}

Again we omit the simple proof of the following fact.

\begin{thm} \label{thm:4-6}
The following statements are equivalent:
\begin{enumerate}
  \item $(X, T)$ is periodic;
  \item $(M(X), T_M)$ is periodic;
  \item $(M(X), T_M)$ is pointwise periodic.
\end{enumerate}
\end{thm}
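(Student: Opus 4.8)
The plan is to prove the cycle $(1)\Rightarrow(2)\Rightarrow(3)\Rightarrow(1)$, with the last implication carrying essentially all of the content. For $(1)\Rightarrow(2)$ I would note that if $T^m=\mathrm{id}_X$ then $T$ is a homeomorphism with $(T^m)^{-1}=\mathrm{id}_X$, so $(T_M^m\mu)(A)=\mu(T^{-m}A)=\mu(A)$ for every Borel set $A$ and every $\mu\in M(X)$; hence $T_M^m=\mathrm{id}_{M(X)}$ and $(M(X),T_M)$ is periodic. The implication $(2)\Rightarrow(3)$ is immediate, since a periodic system is a fortiori pointwise periodic.

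The real work is in $(3)\Rightarrow(1)$, which I would establish by contraposition: assuming $(X,T)$ is not periodic, I will exhibit a measure $\mu\in M(X)$ that fails to be a periodic point of $T_M$. If $(X,T)$ is not even pointwise periodic, then there is a non-periodic $x\in X$, and $\delta_x$ is a non-periodic point of $(M(X),T_M)$, so we are done. Hence I may assume $(X,T)$ is pointwise periodic but not periodic. Here the key elementary observation is that the periods of points must be unbounded: if every point had period at most $N$, then each period would divide $L:=\mathrm{lcm}\{1,\dots,N\}$, whence $T^L=\mathrm{id}_X$, contradicting non-periodicity.

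Using this unboundedness, I would choose periodic points $x_1,x_2,\dots$ whose periods $p_1<p_2<\cdots$ are strictly increasing. Since two periodic orbits of different cardinalities cannot coincide, the orbits $O_k:=\mathrm{orb}(x_k,T)$ are pairwise disjoint. Set $\mu=\sum_{k\ge1}2^{-k}\delta_{x_k}$, a Borel probability measure on $X$. Then $T_M^n\mu=\sum_{k\ge1}2^{-k}\delta_{T^nx_k}$, and because $T^nx_k\in O_k$ and the $O_k$ are pairwise disjoint, the restriction of $T_M^n\mu$ to $O_k$ is $2^{-k}\delta_{T^nx_k}$, while that of $\mu$ is $2^{-k}\delta_{x_k}$. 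Consequently $T_M^n\mu=\mu$ forces $T^nx_k=x_k$, i.e.\ $p_k\mid n$, for every $k$; as $p_k\to\infty$ no $n\in\N$ can satisfy all these conditions simultaneously, so $\mu$ is not periodic and $(M(X),T_M)$ is not pointwise periodic.

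I expect the main obstacle to be the verification that the constructed $\mu$ genuinely has an infinite $T_M$-orbit, since in principle the atoms of the various $\delta_{T^nx_k}$ could recombine or land on the support of $\mu$ in a way that accidentally reproduces $\mu$. The device that removes this difficulty is the pairwise disjointness of the orbits $O_k$: it guarantees that no mass originating from $O_j$ can be transported onto $O_k$, so the equation $T_M^n\mu=\mu$ decouples into the separate conditions $T^nx_k=x_k$ orbit by orbit. An alternative would be to retain overlapping supports but use pairwise distinct weights together with a rigidity argument on the atoms; the disjoint-orbits construction seems cleaner and avoids that bookkeeping.
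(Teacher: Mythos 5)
Your proof is correct, and it follows exactly the route the paper has in mind: the authors omit the proof as ``simple,'' but their Example 4.5 (the measure $\sum_{n\ge 1}2^{-n}\delta_{1/2^{n-1}}$ on a pointwise periodic system with unbounded periods) is precisely the special case of your key construction in $(3)\Rightarrow(1)$, namely a convex combination $\sum_k 2^{-k}\delta_{x_k}$ of Dirac measures at periodic points with strictly increasing periods. Your write-up supplies the details the paper leaves out --- the unboundedness of periods, the disjointness of the cycles $O_k$, and the resulting decoupling of $T_M^n\mu=\mu$ into the conditions $p_k\mid n$ --- and all of these steps check out.
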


In order to characterize $P$-systems on the space of probability
measures, we need a notion of an almost HY-system.

\begin{de} \label{de:4-7}
Let $(X,T)$ be a t.d.s. We say that $(X,T)$ has {\it almost dense
periodic sets} if for each non-empty  open subset $U \subset X$ and
$\epsilon>0$, there are $k \in \N$ and $\mu \in M(X)$ with
$T_M^k\mu=\mu$ such that $\mu(U^c)<\epsilon$, where $U^c=\{x \in X:
x \notin U\}$. We say that $(X, T)$ is an {\it almost HY-system} if
it is totally transitive and has almost dense periodic sets.
\end{de}

We note that $(X,T)$ has almost dense periodic sets if and only if
for each non-empty open subset $U \subset X$, there are periodic points
$\mu_k\in M(X)$ such that $\mu_k(U)\rightarrow 1$. It is easy to see that
if $(X, T)$ has dense small periodic sets, then it has almost dense
periodic sets, and hence an HY-system is a weakly mixing almost HY-system.
If $(X, T)$ has almost dense periodic sets, then it has
an invariant measure with full support. Moreover, we also have the
following result.

\begin{prop} \label{rem:4-8}
Let $(X, T)$ and $(Y, S)$ be two t.d.s.
\begin{enumerate}
\item If $(X, T)$ has almost dense periodic sets and $\pi: (X, T) \rightarrow (Y, S)$ is a factor map, then $(Y, S)$ has almost dense periodic sets.

\item If $(X, T)$ and $(Y, S)$ have almost dense periodic sets, then $(X \times Y, T \times S)$ has almost dense periodic sets.
\end{enumerate}
\end{prop}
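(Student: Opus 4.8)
The plan is to prove both parts directly from the characterization noted just before the proposition: $(X,T)$ has almost dense periodic sets if and only if for each non-empty open $U\subset X$ there are $T_M$-periodic measures $\mu_k\in M(X)$ with $\mu_k(U)\to 1$. Part (1) will follow by pushing periodic measures forward under $\pi$, and part (2) by taking products of periodic measures on the two factors.

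For part (1), let $V\subset Y$ be non-empty open and $\varepsilon>0$. Since $\pi$ is continuous and onto, $U=\pi^{-1}(V)$ is non-empty open in $X$. Because $(X,T)$ has almost dense periodic sets, choose $k\in\N$ and $\mu\in M(X)$ with $T_M^k\mu=\mu$ and $\mu(U^c)<\varepsilon$. The induced map $\pi_M:(M(X),T_M)\to(M(Y),S_M)$ is a factor map (as recalled in the preliminaries), so $\nu:=\pi_M\mu=\pi\mu\in M(Y)$ satisfies $S_M^k\nu=\pi_M T_M^k\mu=\pi_M\mu=\nu$, i.e.\ $\nu$ is $S_M^k$-invariant. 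The key computation is $\nu(V^c)=\mu(\pi^{-1}(V^c))=\mu((\pi^{-1}V)^c)=\mu(U^c)<\varepsilon$, using that preimage commutes with complementation. Thus $\nu$ witnesses the required condition for $V$, and $(Y,S)$ has almost dense periodic sets.

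For part (2), let $W\subset X\times Y$ be a non-empty open set and $\varepsilon>0$. I would first shrink $W$ to a non-empty basic open box $U\times V$ with $U\subset X$, $V\subset Y$ open and non-empty (it suffices to realize the condition on such boxes, since they form a basis and any non-empty open $W$ contains one). Using the hypothesis on each factor, pick $T_M^{k}$-invariant $\mu\in M(X)$ with $\mu(U^c)<\varepsilon/2$ and $S_M^{m}$-invariant $\nu\in M(Y)$ with $\nu(V^c)<\varepsilon/2$. Passing to the common period $\ell=km$, both $\mu$ and $\nu$ are periodic of period $\ell$, so the product measure $\mu\times\nu\in M(X\times Y)$ satisfies $(T\times S)_M^{\ell}(\mu\times\nu)=(T_M^{\ell}\mu)\times(S_M^{\ell}\nu)=\mu\times\nu$; here one uses the standard fact that the induced map on the product respects products of measures and that $(T\times S)^\ell$-invariance of a product measure follows from invariance of each factor. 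Finally, since $(U\times V)^c=(U^c\times Y)\cup(X\times V^c)$, subadditivity gives $(\mu\times\nu)((U\times V)^c)\le\mu(U^c)\cdot\nu(Y)+\mu(X)\cdot\nu(V^c)<\varepsilon/2+\varepsilon/2=\varepsilon$, which is the desired estimate.

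Both arguments are short and the obstacles are minor. In part (1) the only point requiring care is that $\pi_M$ genuinely conjugates $T_M^k$ to $S_M^k$ and sends $\mu$ to a measure supported appropriately; this is exactly the factor-map property $\pi_M\circ T_M=S_M\circ\pi_M$ recorded in the preliminaries, so no real difficulty arises. The main step to get right in part (2) is the harmonization of periods: a priori $\mu$ and $\nu$ are invariant under different powers of $T_M$ and $S_M$, and one must pass to the common period $\ell$ so that a single power of $(T\times S)_M$ fixes $\mu\times\nu$. Once periods are aligned, the invariance of the product measure and the union bound for $(U\times V)^c$ are routine, so I expect the whole proof to be essentially bookkeeping with no genuine analytic difficulty.
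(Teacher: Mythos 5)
Your proof is correct and follows essentially the same route as the paper's: part (1) by pulling back the open set and pushing forward the periodic measure via $\pi$, and part (2) by shrinking to a product box $U\times V$, taking the product of the two periodic measures, aligning periods via the product $\ell=km$ (the paper's $k=k_1\times k_2$), and applying the union bound together with monotonicity of the measure on $W^c\subset (U\times V)^c$. No gaps; the bookkeeping matches the paper's argument step for step.
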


\begin{proof}
(1) Let $V \subset Y$ be a non-empty open subset and $\epsilon>0$.
Then $U=\pi^{-1}(V) \subset X$ is open and non-empty, and there is
$T^k$-invariant measure $\mu$ with $\mu(U^c)<\epsilon$. This implies
that $\pi\mu(V^c)<\ep$. It is clear that $\nu=\pi\mu$ is
$S^k$-invariant, and hence $(Y, S)$ has almost dense periodic sets.

\medskip

(2) Let $U$ be a non-empty open subset of $X \times Y$  and
$\epsilon>0$. Then there are non-empty open subsets $U_1 \subset X$
and $U_2 \subset Y$ such that $U_1\times U_2\subset U$. Since $(X,
T)$ and $(Y, S)$ have almost dense periodic sets, there are
$T^{k_1}$-invariant measure $\mu \in M(X)$ and $S^{k_2}$-invariant
measure $\nu \in M(Y)$ with $\mu(U_1^c)<\tfrac{1}{2}\ep$ and
$\nu(U_2^c)<\tfrac{1}{2}\ep$. Set $k=k_1 \times k_2$, then $\mu
\times \nu \in M(X \times Y)$ is $T^k \times S^k$-invariant and
$$(\mu \times \nu)(U^c)\le (\mu \times \nu)((U_1\times U_2)^c) \le \mu(U_1^c)+\nu(U_2^c)<\epsilon.$$
That is, $(X \times Y, T \times S)$ has almost dense periodic sets.
\end{proof}

It is well known that a totally transitive $P$-system is weakly
mixing \cite{Ban97}. In \cite{HY05}, Huang  and Ye showed that a
totally transitive system with dense small periodic sets is weakly
mixing. Now we improve these results by showing that each almost
HY-system is weakly mixing. That is,

\begin{prop} \label{prop:4-9}
If $(X, T)$ is a totally transitive system with almost dense
periodic sets, then $(X, T)$ is weakly mixing.
\end{prop}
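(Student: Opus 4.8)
The plan is to deduce weak mixing from Petersen's criterion recorded in Lemma \ref{lem:4-2}: it suffices to show that $N(U,U)\cap N(U,V)\neq\emptyset$ for every pair of non-empty open sets $U,V\subset X$. So I fix such $U$ and $V$ and aim to produce a single integer lying in both return-time sets, using almost dense periodic sets to control $N(U,U)$ and total transitivity to bridge to $V$.

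The engine of the argument is the hypothesis of almost dense periodic sets applied to $U$ \emph{alone}. Choosing $\epsilon=\tfrac13$, I obtain $k\in\N$ and a measure $\mu\in M(X)$ with $T_M^k\mu=\mu$ and $\mu(U^c)<\tfrac13$, i.e. $\mu(U)>\tfrac23$. The identity $T_M^k\mu=\mu$ says precisely that $\mu$ is $T^k$-invariant, so $\mu(T^{-kn}U)=\mu(U)$ for every $n\geq 1$. The key observation --- and the one point that requires a little thought --- is that this forces an \emph{entire} arithmetic progression into $N(U,U)$, not merely a single recurrence time: for every $n\geq 1$,
\[
\mu\bigl(U\cap T^{-kn}U\bigr)\ \geq\ \mu(U)+\mu(T^{-kn}U)-1\ =\ 2\mu(U)-1\ >\ \tfrac13\ >\ 0,
\]
so $U\cap T^{-kn}U\neq\emptyset$ and hence $kn\in N(U,U)$. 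Thus $k\N\subseteq N(U,U)$. I expect this step to be the crux: the naive use of Poincaré recurrence yields only some return time, whereas here the large mass $\mu(U)>\tfrac23$ upgrades this to robust recurrence along all of $k\N$, which is exactly what allows total transitivity to finish the job along a compatible progression.

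It remains to connect $U$ to $V$ along the same progression. Since $(X,T)$ is totally transitive, $(X,T^k)$ is transitive, so the set $\{n\geq 1: T^{-kn}V\cap U\neq\emptyset\}$ is infinite; picking any such $n$ gives $kn\in N(U,V)$. Combining with the previous paragraph, $kn\in N(U,U)\cap N(U,V)$, so this intersection is non-empty. As $U,V$ were arbitrary, Lemma \ref{lem:4-2} yields that $(X,T)$ is weakly mixing. The only routine points to verify are that $T_M^k\mu=\mu$ genuinely encodes $T^k$-invariance of $\mu$ (immediate from $(T_M^k\mu)(A)=\mu(T^{-k}A)$) and that transitivity of $(X,T^k)$ produces a multiple of $k$ in $N(U,V)$; neither presents any difficulty.
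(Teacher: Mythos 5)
Your proof is correct and follows essentially the same route as the paper's: obtain a $T^k$-invariant measure giving $U$ mass more than $\tfrac12$, deduce that the whole progression $k\N$ lies in $N(U,U)$, then use transitivity of $(X,T^k)$ to place a multiple of $k$ in $N(U,V)$ and conclude via Lemma \ref{lem:4-2}. The only difference is cosmetic: the paper uses $\mu(U)>\tfrac45$ and leaves the inclusion $k\Z\subset N(U,U)$ implicit, whereas you take $\mu(U)>\tfrac23$ and spell out the inclusion--exclusion estimate $\mu(U\cap T^{-kn}U)\ge 2\mu(U)-1>0$.
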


\begin{proof}
Let $U, V$ be two non-empty open subsets of $X$. Since $(X, T)$ has
almost dense periodic sets, there exist $k \in \mathbb{N}$ and a
$T^k$-invariant measure $\mu \in M(X)$ such that
$\mu(U)>\frac{4}{5}$. Hence, $\{ki: i \in \mathbb{Z}_+\} \subset
N(U, U)$. Note that $(X, T^k)$ is topological transitive, and thus
$N(U, U) \cap N(U, V) \neq \emptyset$. By Lemma \ref{lem:4-2}, $(X,
T)$ is weakly mixing.
\end{proof}

\begin{thm} \label{thm:4-10}
The following statements are equivalent:
\begin{enumerate}
\item $(X,T)$ is an almost HY-system.

\item $(X,T)$ is weakly mixing, and for each open non-empty $U\subset X$ and $\epsilon>0$
there are $k\in\N$ and $\mu\in \mathcal{E}(X, T^k)$ such that $\mu(U^c)<\ep$.

\item $(M(X),T_M)$ is a $P$-system.

\item $(M(X),T_M)$ is an almost HY-system.
\end{enumerate}
\end{thm}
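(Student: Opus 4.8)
The plan is to hang all four equivalences on one elementary identification: a measure $\mu\in M(X)$ is a periodic point of $(M(X),T_M)$ precisely when $T_M^k\mu=\mu$ for some $k\in\N$, that is, precisely when $\mu$ is $T^k$-invariant for some $k$. Consequently ``the periodic points of $(M(X),T_M)$ are dense'' is literally the assertion that $\bigcup_{k\ge1}M(X,T^k)$ is dense in $M(X)$, and a single $T^k$-invariant $\mu$ with $\mu(U^c)<\ep$ is at once a witness for almost dense periodic sets of $(X,T)$ and a periodic point of $(M(X),T_M)$. I would also observe that the ``transitive'' clauses cost nothing: by Proposition~\ref{prop:4-9} an almost HY-system is weakly mixing, and by Theorem~\ref{thm:4-3} weak mixing of $(X,T)$, weak mixing of $(M(X),T_M)$ and transitivity of $(M(X),T_M)$ are equivalent, weak mixing automatically yielding total transitivity. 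Thus in each implication only the periodic-point content must be transported, and I would prove $(1)\Leftrightarrow(2)$, then $(1)\Rightarrow(3)\Rightarrow(1)$, and finally $(3)\Rightarrow(4)\Rightarrow(1)$.

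For the routine directions: $(1)\Leftrightarrow(2)$ is ergodic decomposition. A $T^k$-invariant $\mu$ decomposes as $\mu=\int\nu\,d\lambda(\nu)$ over $\mathcal{E}(X,T^k)$, and if $\mu(U^c)<\ep$ then $\int\nu(U^c)\,d\lambda<\ep$ forces some ergodic $\nu$ with $\nu(U^c)<\ep$; the reverse is immediate since ergodic measures are invariant. For $(3)\Rightarrow(1)$ I would use that, for $U\subset X$ open, $m\mapsto m(U)$ is lower semicontinuous (Lemma~\ref{lem:2-1}), so $\mathcal{W}=\{m\in M(X):m(U)>1-\ep\}$ is a non-empty open set (it contains $\delta_x$ for $x\in U$); density of periodic points of $(M(X),T_M)$ then produces a periodic $\mu\in\mathcal{W}$, which is exactly a $T^k$-invariant measure with $\mu(U^c)<\ep$. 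For $(3)\Rightarrow(4)$ the same lower semicontinuity shows that if $\theta$ is a periodic point of $(M(X),T_M)$ lying in a given open $\mathcal{U}\subset M(X)$, then the Dirac mass $\delta_\theta\in M(M(X))$ is a fixed point of $(T_M)^k_M$ with $\delta_\theta(\mathcal{U})=1$, so dense periodic points of $(M(X),T_M)$ give it almost dense periodic sets; since $(M(X),T_M)$ is moreover totally transitive (its transitivity yields weak mixing of $(X,T)$, hence of $(M(X),T_M)$, via Theorem~\ref{thm:4-3}), this is exactly statement (4).

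The one genuinely constructive direction is $(1)\Rightarrow(3)$. Since the convex combinations of point measures are dense in $M(X)$, it suffices to approximate a fixed $\theta=\sum_{j=1}^m c_j\delta_{y_j}$ by a periodic point of $(M(X),T_M)$. For each $j$ I would choose a small open neighborhood $U_j\ni y_j$ of diameter $<\delta$ and, using almost dense periodic sets, a $T^{k_j}$-invariant $\mu_j$ with $\mu_j(U_j)>1-\delta$; a direct Prohorov estimate then gives $D(\mu_j,\delta_{y_j})\le\delta$. Putting $k=\mathrm{lcm}(k_1,\dots,k_m)$, each $\mu_j$ is $T^k$-invariant, so $\mu=\sum_j c_j\mu_j$ is $T^k$-invariant, hence a periodic point of $(M(X),T_M)$, and convexity of the metric makes it as close to $\theta$ as desired. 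The common-period trick, passing to the $\mathrm{lcm}$, is what keeps the convex combination periodic.

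The step I expect to be the main obstacle is $(4)\Rightarrow(1)$, because the hypothesis now lives one level up, on $M(M(X))$. Here I would exploit the barycenter. Given open $U\subset X$, set $\mathcal{U}=\{m:m(U)>1-\tfrac12\ep\}$, which is open and non-empty; almost dense periodic sets of $(M(X),T_M)$ yield a $(T_M)^k$-invariant $\Lambda\in M(M(X))$ with $\Lambda(\mathcal{U})$ close to $1$. Its barycenter $\mu=\int_{M(X)} m\,d\Lambda(m)\in M(X)$ is $T^k$-invariant (a short computation using that $g(m)=\int f\,dm$ is continuous and that $\Lambda$ is $(T_M)^k$-invariant), and since $\mu(U)=\int m(U)\,d\Lambda(m)\ge(1-\tfrac12\ep)\,\Lambda(\mathcal{U})$ one obtains $\mu(U^c)<\ep$ after fixing the tolerances. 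This delivers almost dense periodic sets of $(X,T)$, and combined with total transitivity (from weak mixing, via Proposition~\ref{prop:4-9} and Theorem~\ref{thm:4-3}) gives (1). The delicate points to verify are the $T^k$-invariance of the barycenter and the identity $\mu(U)=\int m(U)\,d\Lambda$ for the non-continuous indicator $\mathbf 1_U$, which is justified by approximating $\mathbf 1_U$ from below by continuous functions since $U$ is open.
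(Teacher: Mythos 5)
Your proposal is correct and follows essentially the same route as the paper: Proposition~\ref{prop:4-9} plus ergodic decomposition for $(1)\Leftrightarrow(2)$, approximation of convex combinations of Dirac measures by convex combinations of periodic measures (made simultaneously periodic by passing to a common power) for the $P$-system direction, the trivial witness for $(3)\Rightarrow(4)$, and the barycenter computation for $(4)\Rightarrow(1)$. The only deviations are cosmetic: you insert a redundant direct proof of $(3)\Rightarrow(1)$ via lower semicontinuity of $m\mapsto m(U)$, and you use a Prohorov-metric estimate where the paper works with the weak$^*$ neighborhood basis $V_{\delta_x}(f_1,\ldots,f_s;\epsilon)$; both are sound.
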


\begin{proof}
(1) $\Rightarrow$ (2) Let $(X, T)$ be an almost HY-system. By Proposition \ref{prop:4-9}, $(X, T)$ is weakly mixing.
For each non-empty open subset $U \subset X$ and $\epsilon>0$,
there are $k \in \N$ and $\nu \in M(X)$ with $T_M^k\nu=\nu$ such
that $\nu(U^c)<\epsilon$. Using the ergodic decomposition we have
$\nu=\int_{\mathcal{E}(X, T^k)} \, m \, \mathrm{d}\tau (m)$, where
$\tau \in M(M(X))$ with $\tau(\mathcal{E}(X, T^k))=1$. It follows
that
$$\int_{\mathcal{E}(X, T^k)} \, m(U^c) \, \mathrm{d}\tau (m)=\nu(U^c)<\ep.$$
Thus there is $\mu\in \mathcal{E}(X, T^k)$ such that $\mu(U^c)<\epsilon$.

\medskip

(2) $\Rightarrow$ (3) Let $x\in X$. For any $f_1, \ldots, f_s\in
C(X,\R)$ and $\epsilon>0$,  there is $r>0$ such that if
$d(y,x)<r<\epsilon/2$ then $|f_j(y)-f_j(x)|<\epsilon/2$ for all
$j=1, \ldots, s$. Let $U=B(x,r)$. Then there is $k \in \mathbb{N}$
and $\mu \in \mathcal{E}(X, T^k)$ such that
$\mu(U^c)<(\max\{||f_j||:{1\le j\le
s}\})^{-1}\epsilon/4$. Thus
$$\left|\int_X f_j(y)d\mu(y)-f_j(x)\right|\le \int_{U^c}|f_j(y)-f_j(x)|d\mu(y)+\int_{U}|f_j(y)-f_j(x)|d\mu(y) <\epsilon,$$
for each $1\le j \le s$. That is, $\mu \in V_{\delta_x}(f_1,\ldots,f_s;\epsilon)$. So $\delta_x$ is a limit point of $P(T_M)$.

For $x_1, \ldots, x_n\in X$, let $\nu=\tfrac{1}{n}\sum_{j=1}^n
\delta_{x_j}$. For give $f_1,\ldots,f_s\in C(X,\R)$ and
$\epsilon>0$, let $\mu_j\in P(T_M)\cap
V_{\delta_{x_j}}(f_1,\ldots,f_s;\ep)$ and
$\mu=\tfrac{1}{n}\sum_{j=1}^n \mu_j$. It is clear that $\mu$ is a
periodic point of $T_M$ and $\mu\in V_{\nu}(f_1,\ldots,f_s;\ep)$.
This implies that $M_n(X)\subset \overline{P(T_M)}$. Therefore, the
set of periodic points of $(M(X), T_M)$ is dense in $M(X)$.

\medskip

(3) $\Rightarrow$ (4) is obvious.

\medskip

(4) $\Rightarrow$ (1) Let $U$ be a non-empty open subset of $X$ and
$\epsilon>0$. Let $V=\{m\in M(X):m(U)>1-\tfrac{1}{2}\epsilon\}$.
Then there are $k\in \N$ and $T_M^k$-invariant measure $\nu$ on
$M(X)$ with $\nu(V^c)<\tfrac{1}{2}\epsilon$. Let $\mu=\int_{M(X)}\,
\theta \, \mathrm{d}\nu(\theta)$. It is clear that $T_M^k\mu=\mu$
and
$$\mu(U^c)=\int_V \, \theta(U^c) \, \mathrm{d}\nu(\theta)+ \int_{V^c} \, \theta(U^c) \, \mathrm{d} \nu(\theta)\le
\frac{\epsilon}{2}\nu(V)+\nu(V^c)<\epsilon.$$ That is, $(X, T)$ is
an almost HY-system.
\end{proof}

Let $\Sigma_2=\prod_{i=1}^{\infty} \{0, 1\}$, where $\{0, 1\}$ and
$\Sigma_2$ are equipped with the  discrete and the product topology
respectively. For $n \in \mathbb{N}$ and $(x_1, \cdots, x_n) \in
\{0, 1\}^n$, define
$$[x_1, \cdots, x_n]:=\{y \in \Sigma_2: y_i=x_i, i=1, \ldots, n\},$$
which is called an {\it $n$-cylinder} . It is known that the set of
all cylinders form a  semi-algebra which generates the Bore
$\sigma$-algebra of $\Sigma_2$. If $x=(x_1, x_2, \cdots)$ and
$y=(y_1, y_2, \cdots)$ are two elements of $\Sigma_2$, then their
sum $x \oplus y=(z_1, z_2, \cdots)$ is defined as follows. If
$x_1+y_1<2$, then $z_1=x_1+y_1$; if $x_1+y_1 \ge 2$, then
$z_1=x_1+y_1-2$ and we carry $1$ to the next position. The other
terms $z_2, \cdots$ are successively determined in the same fashion.
Let $T: \Sigma_2 \rightarrow \Sigma_2$ be defined by $T(z)=z \oplus
\mathbf{1}$ for each $z \in \Sigma_2$, where $\mathbf{1}=(1, 0, 0,
\cdots)$. It is known that $T$ is minimal and unique ergodic, which
is called a {\it dyadic adding machine}. Now we have

\begin{thm} \label{thm:4-11}
There is a minimal weakly mixing almost-HY-system.
\end{thm}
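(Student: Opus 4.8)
The plan is to reduce the statement, via Theorem \ref{thm:4-10}, to the construction of a minimal weakly mixing system $(X,T)$ having almost dense periodic sets; since weak mixing already forces total transitivity, such a system is automatically an almost-HY-system, and the only genuine task is to realize Definition \ref{de:4-7}: for every non-empty open $U\subset X$ and every $\ep>0$ one must exhibit $k\in\N$ and a $T^k$-invariant $\mu\in M(X)$ with $\mu(U^c)<\ep$. Before constructing anything I would isolate the obstruction. A minimal weakly mixing system is \emph{totally minimal} (for each $k$, $(X,T^k)$ is again minimal), so every $T^k$-invariant measure has full support; hence the required concentration $\mu(U)>1-\ep$ on an arbitrarily small set $U$ cannot arise, as it does for the adding machine $(\Sigma_2,T)$, from a proper closed $T^{2^n}$-invariant subset (there each $n$-cylinder is $T^{2^n}$-invariant and carries a concentrated conditional Haar measure). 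Instead it must come from genuine non-unique ergodicity of the minimal systems $(X,T^k)$ together with a localization of mass. This is exactly what will separate an almost-HY-system from an HY-system, which by total minimality a minimal system can never be.

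To build such an $X$ I would use a cut-and-stack (equivalently, $S$-adic) construction whose tower heights $h_n$ are adding-machine periods, $h_n=2^{r_n}$, so that the combinatorics introduced above organize the stacking. At stage $n$ one has a Rokhlin tower of height $h_n$ with clopen levels of diameter tending to $0$; passing to stage $n+1$ one cuts the tower into finitely many subcolumns, restacks them, and inserts spacers according to a Chac\'on-type pattern. The spacers serve two purposes simultaneously. First, an appropriate non-commensurate spacer pattern guarantees that $(X,T)$ is minimal and has no non-constant topological eigenfunctions, hence is minimal and weakly mixing, and thus totally minimal. Second, I would make the cutting deliberately unbalanced so that, at each scale, a prescribed union of a few thin levels receives, under a suitable ergodic $T^{h_n}$-invariant measure $\mu_n$, almost all of the mass: $\mu_n$ has full support (as it must) yet is concentrated on a set of small diameter. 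Since the levels refine and their diameters shrink, for given $U$ and $\ep$ one chooses $n$ large enough that the concentrated part sits inside $U$, and takes $k=h_n$, $\mu=\mu_n$, giving $\mu(U^c)<\ep$.

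It then remains to verify the three properties. Minimality follows in the usual way from the facts that the towers cover $X$ and that every level is entered with bounded gaps. Weak mixing (hence total transitivity) follows from the spacer scheme eliminating all eigenvalues, by the standard Chac\'on-type argument; alternatively one checks $N(U,U)\cap N(U,V)\neq\emptyset$ via Lemma \ref{lem:4-2}. The measures $\mu_n$ produced above are precisely the periodic points of $(M(X),T_M)$ demanded by Definition \ref{de:4-7}, so $(X,T)$ has almost dense periodic sets and is therefore a minimal weakly mixing almost-HY-system; equivalently, by Theorem \ref{thm:4-10}, $(M(X),T_M)$ is a $P$-system. The main obstacle is the tension highlighted above: weak mixing is produced by the independence and spreading built into the spacer pattern, which pushes the invariant measures toward equidistribution, whereas the concentration property requires ergodic $T^{h_n}$-invariant measures putting almost all their mass on a vanishingly small set. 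The delicate point of the construction, where essentially all the bookkeeping lies, is to choose the number of cuts, the heights $h_n$, and the spacer lengths at each stage so that these two requirements coexist: enough asymmetry to sustain a family of concentrated non-uniquely-ergodic measures, but enough mixing of the columns to keep the system weakly mixing and totally minimal.
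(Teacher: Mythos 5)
Your reduction via Theorem \ref{thm:4-10} and your structural observations (minimal weakly mixing implies totally minimal, hence every $T^k$-invariant measure has full support and concentration must come from non-unique ergodicity of $(X,T^k)$) are correct, but the construction itself---which is the entire content of the theorem---is deferred to ``bookkeeping'', and the one concrete mechanism you name would in fact destroy what you need. Here is the precise obstruction. If $T_M^k\mu=\mu$ and $\mu$ is $T^k$-ergodic, then $\bar\mu:=\frac1k\sum_{i=0}^{k-1}T_M^i\mu$ is $T$-invariant and $T$-ergodic, $\mu\le k\bar\mu$, and $\mu$ is one of the finitely many $T^k$-ergodic components of $\bar\mu$; this decomposition is nontrivial if and only if $(X,\bar\mu,T)$ has a \emph{measurable} rational eigenvalue $e^{2\pi i/d}$ with $d\mid k$, $d>1$. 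So your concentrated ergodic $T^{h_n}$-invariant measures $\mu_n$ can exist only if the system carries ergodic invariant measures whose measurable point spectrum contains the dyadic rationals $e^{2\pi i/2^{r_n}}$---measures that are as far from measure-theoretically weakly mixing as possible. But ``the standard Chac\'on-type argument'' kills exactly these measurable eigenvalues (it makes the invariant measures totally ergodic), and then every periodic point of $(M(X),T_M)$ is a fixed point, i.e.\ lies in $M(X,T)$; and fixed points can never witness Definition \ref{de:4-7} in a nontrivial minimal system, because invariant measures $\mu_n$ with $\mu_n(U_n)>1-\frac1n$ along a neighborhood basis $\{U_n\}$ of a point $x$ would converge weakly to $\delta_x$, forcing $\delta_x\in M(X,T)$ by closedness, i.e.\ $Tx=x$, contradicting minimality. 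Thus a spacer scheme that ``eliminates all eigenvalues'' provably fails; what must be eliminated is only the \emph{continuous} eigenfunctions, while the dyadic eigenvalues survive measurably.

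This coexistence---topological weak mixing together with invariant measures that are measure-theoretically an odometer---is exactly what the paper obtains, with no construction at all, from Lehrer's theorem \cite{Leh87}: the dyadic adding machine $(\Sigma_2,T,\mu)$ admits a minimal, uniquely ergodic, topologically weakly mixing model $(Y,S,\nu)$. Since each cylinder $C$ satisfies $T^{2^{|C|}}C=C$, the conditional measures $\mu_C$ are periodic points of $T_M$; by Lemma \ref{lem:2-2} they transport through the measure isomorphism to periodic points of $S_M$, whose convex combinations approximate every conditional measure $\nu_U$ and hence every $\delta_y$, so $P(S_M)$ is dense in $M(Y)$ and $(M(Y),S_M)$ is a $P$-system. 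Your cut-and-stack plan, carried out honestly, would have to reproduce this phenomenon by hand: keep the odometer as a measurable factor of the invariant measure while destroying every continuous eigenfunction. That amounts to reproving a special case of Lehrer's theorem; nothing in your sketch addresses how the cuts, heights, and spacers accomplish it, and the tension you relegate to bookkeeping is precisely where the theorem lives. This is a genuine gap.
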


\begin{proof}
Let $(\Sigma_2, T)$ be the dyadic adding machine with a unique
ergodic measure $\mu$.  A remarkable result due to Lehrer
\cite{Leh87} which generalized the famous Jewett-Krieger Theorem
says that $(\Sigma_2, T, \mu)$ has a minimal weakly mixing uniquely
ergodic model, i.e., there exists a system $(Y, S, \nu)$ isomorphic
to $(\Sigma_2, T, \mu)$, where $(Y, S)$ is a minimal, unique ergodic
and weak mixing t.d.s. We shall show that $(Y, S)$ is an
almost-HY-system. By Theorem \ref{thm:4-10}, it remains to show that
the set of periodic points of $(M(Y), S_M)$ is dense.

Let $\pi: (\Sigma_2, T, \mu) \rightarrow (Y, S, \nu)$ be an
isomorphism, that is, there are invariant Borel subsets $X_1
\subset \Sigma_2$ and $X_2 \subset Y$ with $\mu(X_1)=\nu(X_2)=1$ and an
invertible measure-preserving transformation $\pi: X_1 \rightarrow
X_2$ such that $\pi(Tx)=S\pi(x)$ for all $x \in X_1$.

Let $\epsilon>0$ and let $U$ be a non-empty open subset of $Y$.
Since $(Y, S)$ is minimal and unique ergodic, we have $\nu(U)>0$.
Thus, there are finitely many pairwise disjoint cylinders $A_1,
\ldots, A_k$ of $\Sigma_2$ such that $\mu(\pi^{-1}U \bigtriangleup A)<
\nu(U) \cdot \epsilon$ with $A=\bigcup_{i=1}^k A_i$, which implies
$\nu(U \bigtriangleup \pi(A \cap X_1))<\nu(U) \cdot \epsilon$. Using
Lemma \ref{lem:2-2} (2), $d(\nu_U, \nu_{\pi(A \cap X_1)})\le
2\epsilon$. Since $T^{2^{|C|}}C=C$ for each cylinder $C$ of $X$,
where $|C|$ stands for the length of $C$, we conclude that $\mu_C$ is
periodic. In particular, each $\mu_{A_i}$ is periodic. By Lemma
\ref{lem:2-2} (3), each $\nu_{\pi(A_i \cap X_1)}$ is also periodic.
By Lemma \ref{lem:2-2} (1), $\nu_{\pi(A \cap X_1)}=\sum_{i=1}^k p_i
\nu_{\pi(A_i \cap X_1)}$, where $p_i=\mu(A_i)/\mu(A)$. Thus,
$\nu_{\pi(A \cap X_1)}$ is periodic. It follows that $\nu_U$ is
approached by periodic points of $(M(Y), S_M)$.

Now take $y \in Y$ and let $\{U_n\}_{n=1}^{\infty}$ be a sequence of
open neighborhoods  of $y$ such that $\mathrm{diam}(U_n) \rightarrow
0$. For any $f \in C(Y, \mathbb{R})$, we have
$$\left|\int_Y \, f(z) \, \mathrm{d}\nu_{U_n}-f(y)\right|\le \int_{U_n} \left|f(z)-f(y)\right| \mathrm{d}\nu_{U_n} \rightarrow 0.$$
A simple calculation  shows $\nu_{_{U_n}} \rightarrow \delta_y$, and
hence $\delta_y$ is a limit point  of $P(S_M)$. This implies that
each element of $M_n(Y)$ is approached by elements of $P(S_M)$.
Since $\bigcup_{n=1}^{\infty} M_n(Y)$ is dense in $M(Y)$, it follows
that $(M(Y), S_M)$ is a $P$-system.
\end{proof}

\begin{rem} \label{rem:4-12}
(1) For the dyadic adding machine $(\Sigma_2, T)$, we remark that
the convex combinations of conditional measures $\mu_B$ ($B$ is a
cylinder of $\Sigma_2$) is dense in $M(\Sigma_2)$. More precisely,
for each $n \in \mathbb{N}$, define
$C_n(\Sigma_2)=\{\mu=\frac{1}{n}\sum_{i=1}^{n} \mu_{A_i}: A_i \mbox{
is a cylinder of } \Sigma_2\}$. Then each $\mu \in C_n(\Sigma_2)$ is
periodic and $\bigcup_{n=1}^{\infty} C_n(\Sigma_2)$ is dense in
$M(\Sigma_2)$. This shows that the set of periodic points of
$(M(\Sigma_2), T_M)$ is dense.

\medskip

(2) In \cite[Theorem 5.10]{HLY12}, Huang, Li and Ye showed that a
minimal t.d.s. has dense small periodic sets if and only if it is an
almost one-to-one extension of some adding machine. We remark that a
nontrivial adding machine is never totally transitive, and hence the
systems described in Theorem \ref{thm:4-11} do not have dense small
periodic sets. This shows that any minimal almost HY-system is not an HY-system.

\end{rem}

The following question is natural.

\begin{ques} Is there a weakly mixing, proximal
$E$-system $(X,T)$ such that $(M(X),T_M)$ is a $P$-system?
\end{ques}

This question will be answered affirmatively
in a forthcoming paper of Lian, Shao and Ye by showing that each ergodic system  has a weakly mixing
and proximal topological model \cite{LSY13}. And it follows that $(X,T)$ needs not to be
a weakly mixing $M$-system whenever $(M(X),T_M)$ is an $M$-system.

\section{Disjointness}

In this section, we discuss disjointness. The notion of disjointness
of two t.d.s.  was introduced by Furstenberg in his seminar paper
\cite{Fur67}. Let $(X, T)$ and $(Y, S)$ be two t.d.s. We say $J
\subset X \times Y$ is a {\it joining} of $X$ and $Y$ if $J$ is a
non-empty closed invariant set, and is projected onto $X$ and $Y$,
respectively. If each joining is equal to $X \times Y$, then we say
that $(X, T)$ and $(Y, S)$ are {\it disjoint}, denoted by $(X, T)
\perp (Y, S)$ or $X \perp Y$. Note that if $(X, T) \perp (Y, S)$,
then one of them is minimal \cite{Fur67}, and if $(Y, S)$ is
minimal, then $(X, T)$ has dense minimal points and it is weakly
mixing if it is transitive \cite{HY05}.

\begin{rem} \label{rem:5-1}
For any nontrivial t.d.s. $(X, T)$ we have at least the two distinct
selfjoinings  $X \times X$ and $\bigtriangleup=\{(x, x): x \in X\}$,
and thus the only system which is disjoint from itself is the
trivial t.d.s. Using Theorem \ref{thm:4-11}, we obtain that there is a minimal almost HY-system is not in
$\mathcal{M}^{\perp}$, where $\mathcal{M}^{\perp}$ denote the
collection of all t.d.s. disjoint from all minimal systems.
\end{rem}

We say that $(X,T)$ is {\it strongly disjoint from all minimal systems} if $(X^n, T^{(n)})$
is disjoint from all minimal systems for any $n\in\N$. Then we have

\begin{thm} \label{thm:5-2}
Let $(X, T)$ be a t.d.s. Then
\begin{enumerate}
\item If $(K(X),T_K)$ is weakly mixing and is disjoint from all minimal systems, then $(X,T)$ is weakly mixing and is disjoint from all minimal systems.

\item  If $(X,T)$ is strongly disjoint from all minimal systems, then both $(K(X),T_K)$ and $(M(X),T_M)$ are disjoint from all minimal systems.
\end{enumerate}
\end{thm}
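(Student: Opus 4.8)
The plan is to prove the two parts separately, in both cases reducing the disjointness of an induced system to disjointness of the base (or of its powers) by manufacturing a joining on the induced space out of a given joining downstairs.

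\emph{Part (1).} The weak mixing of $(X,T)$ is immediate from Lemma \ref{lem:3-1}, so the content is disjointness. I argue by contraposition: suppose $(X,T)$ is \emph{not} disjoint from some minimal system $(Z,R)$, so there is a proper joining $J \subsetneq X\times Z$. For $z\in Z$ write $J_z=\{x\in X:(x,z)\in J\}$ for the (non-empty, closed) fiber, and define
$$\hat J=\{(A,z)\in K(X)\times Z: A\cap J_z\neq\emptyset\}.$$
I would then check that $\hat J$ is a \emph{proper} joining of $(K(X),T_K)$ and $(Z,R)$, contradicting the hypothesis that $(K(X),T_K)$ is disjoint from all minimal systems, in particular from $(Z,R)$. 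Projection onto $Z$ holds because each $J_z\neq\emptyset$ (take $A=\{x\}$ with $x\in J_z$); projection onto $K(X)$ holds because $J$ projects onto $X$ (given $A$, pick $x\in A$ and $z$ with $x\in J_z$). Forward invariance is the computation that $x\in A\cap J_z$ implies $Tx\in TA\cap J_{Rz}$, and full invariance follows, using surjectivity of $T$ and $R$, by taking $A'=T^{-1}(A)$. Closedness is a routine convergence argument: if $(A_n,z_n)\to(A,z)$ with $x_n\in A_n\cap J_{z_n}$, a convergent subsequence $x_n\to x$ gives $x\in A\cap J_z$, since $J$ is closed and Hausdorff limits of points of the $A_n$ lie in $A$. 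Properness is the crucial point: choosing $(x_0,z_0)\notin J$ gives $x_0\notin J_{z_0}$, whence $(\{x_0\},z_0)\notin\hat J$.

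\emph{Part (2).} Here I would use the factor maps $p_n\colon X^n\to K_n(X)$, $(x_1,\dots,x_n)\mapsto\{x_1,\dots,x_n\}$, and $q_n\colon X^n\to M_n(X)$, $(x_1,\dots,x_n)\mapsto\frac1n\sum_{i=1}^n\delta_{x_i}$, together with the standard fact that disjointness passes to factors: since $(X^n,T^{(n)})$ is disjoint from every minimal system, so are its factors $(K_n(X),T_K)$ and $(M_n(X),T_M)$. Fix a minimal system $(Z,R)$ and a joining $J\subseteq K(X)\times Z$; I must show $J=K(X)\times Z$. The key step is to prove that $J_n:=J\cap(K_n(X)\times Z)$ is itself a joining of $(K_n(X),T_K)$ and $(Z,R)$ for every $n$. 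It is closed and invariant (because $K_n(X)$ is closed and $T_K$-invariant), and it projects onto $K_n(X)$ because $J$ projects onto $K(X)$. The one non-obvious requirement is that $J_n$ project onto $Z$; I would establish this by observing that the projection $Z_n\subseteq Z$ of $J_n$ is non-empty, closed, and satisfies $RZ_n\subseteq Z_n$ (if $(A,z)\in J$ with $|A|\le n$ then $(TA,Rz)\in J$ and $|TA|\le n$), and then invoking minimality of $(Z,R)$ to conclude $Z_n=Z$. Granting this, $K_n(X)\perp Z$ forces $J_n=K_n(X)\times Z$, i.e. $K_n(X)\times Z\subseteq J$; letting $n$ range over $\N$ and using that $\bigcup_n K_n(X)$ is dense in $K(X)$ while $J$ is closed yields $J=K(X)\times Z$. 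The argument for $(M(X),T_M)$ is verbatim the same, with $M_n(X)$ (closed and $T_M$-invariant, with dense union in $M(X)$ by Lemma \ref{lem:2-3}) in place of $K_n(X)$.

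\emph{Main obstacle.} In both parts the real work is ensuring the candidate set is a genuine joining — specifically the surjectivity onto $Z$ and the properness. In Part (1) the delicate point is that the naive fiber map $z\mapsto J_z$ into $K(X)$ is only upper semicontinuous, which is why I pass to the ``meets the fiber'' set $\hat J$ rather than the graph of $z\mapsto J_z$; this repairs both closedness and the projection onto $K(X)$. In Part (2) the crux is exactly that a joining over the whole space $K(X)$ need not restrict to a joining over the dense pieces $K_n(X)$ unless one knows the restricted relation still sees all of $Z$, and it is the minimality of $(Z,R)$, via the forward-invariant closed set $Z_n$, that supplies this.
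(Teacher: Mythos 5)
Your proposal is correct, and the two parts fare differently against the paper. For Part (2) you follow essentially the paper's route: view $K_n(X)$ and $M_n(X)$ as factors of $(X^n,T^{(n)})$, use that disjointness passes to factors, and then invoke density of $\bigcup_n K_n(X)$ (resp. $\bigcup_n M_n(X)$). The paper states the density step as if it were immediate; you supply the justification it omits, namely that the restriction $J_n=J\cap(K_n(X)\times Z)$ of a joining is again a joining, the only delicate point being surjectivity onto $Z$, which you get because the projection of $J_n$ to $Z$ is a non-empty closed forward-invariant subset of the minimal system $(Z,R)$ and hence all of $Z$. For Part (1), however, your argument is genuinely different from the paper's. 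The paper argues directly: weak mixing of $(K(X),T_K)$ yields a transitive point $E$; given a joining $J$ of $(X,T)$ with a minimal $(Y,S)$, one picks $x\in E$ and $y$ with $(x,y)\in J$, uses disjointness to get $\overline{\mathrm{orb}((E,y),T_K\times S)}=K(X)\times Y$, and then the Hausdorff-metric estimate $d(T^{n}x,u)\le d_H(T_K^{n}E,\{u\})$ shows $(x,y)$ has dense orbit, forcing $J=X\times Y$. You instead argue by contraposition, lifting a proper joining $J$ of $X$ and $Z$ to the set $\hat J=\{(A,z)\in K(X)\times Z: A\cap J_z\neq\emptyset\}$; your verifications (closedness via Hausdorff limits, both projections, forward invariance, and properness witnessed by a singleton $\{x_0\}$ with $(x_0,z_0)\notin J$) are all sound. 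A real payoff of your route is that it never uses weak mixing of $(K(X),T_K)$: you prove that disjointness from all minimal systems descends from $(K(X),T_K)$ to $(X,T)$ unconditionally, with weak mixing transferred separately by Lemma \ref{lem:3-1}, whereas the paper's argument needs the weak mixing hypothesis even for the disjointness conclusion, since without transitivity of $(K(X),T_K)$ there is no transitive point $E$ to start from. The paper's proof, in exchange, is shorter and exhibits the dense orbit explicitly.
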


\begin{proof}
(1) Let $(Y, S)$ be a minimal system. Since $(K(X), T_K) \perp (Y,
S)$, we have  $\ol{\mathrm{orb}((E,y),T_K\times S)}=K(X)\times Y$
for any transitive points $E$ of $(K(X),T_K)$ and any $ y\in Y$.
Choose $x\in E$ and let $J\subset X\times Y$ be a joining. Then
there is $y\in Y$ with $(x,y)\in J$. We will show that
$\ol{\mathrm{orb}((x,y), T\times S)}=X\times Y$, which implies $J=X
\times Y$, and hence $(X,T) \perp (Y, S)$.

In fact, for any pair $u\in X$ and $v\in Y$, there is a positive integers sequence $\{n_i\}_{i=1}^{\infty}$ such that
$(T_K \times S)^{n_i}(E, y) \rightarrow (\{u\}, v)$. By the definition of the Hausdorff metric on $K(X)$ we known that
$$d(T^{n_i}x, u) \le
 d_H(T_K^{n_i}E, \{u\}) \rightarrow 0.$$
This implies $T^{n_i}x \rightarrow u$, and hence $(T \times S)^{n_i} (x, y) \rightarrow (u, v)$.

\medskip

(2) Assume that $(X, T)$ is strongly disjoint from all minimal
systems. Then the restriction of  $T_K$ to $K_n(X)$, as a factor of
$T^{(n)}$, is disjoint from all minimal systems for each $n \in
\mathbb{N}$. Since $\bigcup_{n=1}^{\infty} K_n(X)$ is dense in $K(X)$, $(K(X),
T_K)$ is disjoint from all minimal systems.

The same argument shows that $(M(X), T_M)$ is disjoint from all minimal systems.
\end{proof}

\begin{rem} \label{rem:5-3}
(1) Furstenberg \cite{Fur67} showed that each weakly mixing system
with dense periodic points is disjoint from any minimal systems. It
directly follows from Theorem \ref{thm:5-2} and Theorem
\ref{thm:3-2} that each HY-system (i.e., weakly mixing system with
dense small periodic sets) is disjoint from any minimal systems
(first proved by Huang and Ye in \cite[Theorem 3.4]{HY05}).

\medskip

(2) It follows from Theorem \ref{thm:4-11} that $(X, T)$ needs not
to be disjoint from all minimal systems whenever $(M(X), T_M)$ is
disjoint from all minimal systems.
\end{rem}

We say that a t.d.s. $(X, T)$ has {\it dense distal sets} if for
each non-empty open  subset $U$ of $X$, there is a distal point $C$
of $(K(X), T_K)$ such that $C \subset U$.

\begin{prop} \label{prop:5-4}
The following statements are equivalent:
\begin{enumerate}
\item $(X, T)$ is a weakly mixing system with dense distal sets;

\item $(K(X), T_K)$ is a weakly mixing system with dense distal points;

\item $(K(X), T_K)$ is a weakly mixing system with dense distal sets.
\end{enumerate}
\end{prop}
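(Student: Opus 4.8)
The plan is to treat the three ``weakly mixing'' clauses as interchangeable and concentrate on the distal part. By Lemma \ref{lem:3-1}, $(X,T)$ is weakly mixing if and only if $(K(X),T_K)$ is weakly mixing, so the adjective ``weakly mixing'' may be attached to any of the three systems at will; what remains is to prove the equivalence of the three distality conditions. The whole argument rests on the characterization recalled in Section 2 that a point $x$ is distal if and only if it is IP$^*$-recurrent, i.e. $N(x,U)\in\mathcal{F}_{ip}^*$ for every neighborhood $U$ of $x$. Two consequences will be used repeatedly. First, distal points are preserved by factor maps: if $\pi\colon(Z,R)\ra(W,Q)$ is a factor map and $z$ is distal, then $N(\pi(z),V)\supseteq N(z,\pi^{-1}V)\in\mathcal{F}_{ip}^*$, and since any superset of an IP$^*$-set is an IP$^*$-set, $\pi(z)$ is distal. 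Second, a finite tuple of distal points is distal: because $\mathcal{F}_{ip}^*$ is a filter, $N((z_1,\dots,z_n),\prod_i U_i)=\bigcap_i N(z_i,U_i)\in\mathcal{F}_{ip}^*$, so $(z_1,\dots,z_n)$ is distal in the product.

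The engine of the proof consists of three natural maps, all of which are readily checked to be continuous, equivariant and surjective, hence factor maps: the finite-union map $u_n\colon K(X)^n\ra K(X)$, $(A_1,\dots,A_n)\mapsto\bigcup_i A_i$; the singleton embedding $\iota\colon K(X)\ra K(K(X))$, $A\mapsto\{A\}$, which is a conjugacy onto a closed invariant subsystem (Hausdorff limits of singletons are singletons); and the big-union map $\sigma\colon K(K(X))\ra K(X)$, $\mathcal{C}\mapsto\bigcup_{A\in\mathcal{C}}A$. I would then carry out the cycle $(1)\Rightarrow(2)\Rightarrow(3)\Rightarrow(1)$.

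For $(1)\Rightarrow(2)$, fix a basic open set $\langle U_1,\dots,U_n\rangle$ of $K(X)$. Dense distal sets of $(X,T)$ yield, for each $i$, a distal point $C_i$ of $(K(X),T_K)$ with $C_i\subset U_i$. Then $(C_1,\dots,C_n)$ is distal in $(K(X)^n,(T_K)^{(n)})$, so its image $u_n(C_1,\dots,C_n)=C_1\cup\dots\cup C_n$ is a distal point of $(K(X),T_K)$ lying in $\langle U_1,\dots,U_n\rangle$; hence the distal points are dense in $K(X)$. For $(2)\Rightarrow(3)$, given a non-empty open $\mathcal{U}\subset K(X)$, density of distal points furnishes a distal point $\mathcal{A}\in\mathcal{U}$ of $(K(X),T_K)$; then $\iota(\mathcal{A})=\{\mathcal{A}\}$ is a distal point of $(K(K(X)),(T_K)_K)$ contained in $\mathcal{U}$, which is exactly the dense-distal-sets condition for $(K(X),T_K)$. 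For $(3)\Rightarrow(1)$, given a non-empty open $U\subset X$, apply $(3)$ to the open set $\langle U\rangle\subset K(X)$ to obtain a distal point $\mathcal{C}$ of $(K(K(X)),(T_K)_K)$ with $\mathcal{C}\subset\langle U\rangle$; then $\sigma(\mathcal{C})=\bigcup_{A\in\mathcal{C}}A$ is a distal point of $(K(X),T_K)$ contained in $U$, giving dense distal sets of $(X,T)$.

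The steps that require genuine care, and which I expect to be the main obstacle, are the topological verifications underlying $\sigma$: that the union of a compact family of closed sets is again closed (so that $\sigma$ is well defined) and that $\sigma$ is continuous for the Vietoris topologies. Both are standard facts about hyperspaces---closedness follows by a subsequence argument using that a Hausdorff limit absorbs the limits of points drawn from the member sets, and continuity is a routine neighborhood-basis check---but they are precisely the places where the argument could fail if these maps were not genuinely equivariant factor maps. Everything else reduces to the two IP$^*$ observations above together with Lemma \ref{lem:3-1}.
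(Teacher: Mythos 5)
Your proposal is correct and follows essentially the same route as the paper: the same three witnesses (the finite union $C=\bigcup_{i=1}^n C_i$ for $(1)\Rightarrow(2)$, the singleton $\{\mathcal{A}\}$ for $(2)\Rightarrow(3)$, and the big union $\bigcup\{A: A\in\mathcal{A}\}$ for $(3)\Rightarrow(1)$, including the same subsequence argument for its closedness) and the same IP$^*$-filter machinery. The only difference is organizational: where the paper verifies IP$^*$-recurrence of each witness directly by the neighborhood combinatorics in the Vietoris basis, you absorb exactly that combinatorics into the continuity of the union maps $u_n$ and $\sigma$ and invoke the general facts that finite products of distal points are distal and that factor maps preserve distal points.
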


\begin{proof}
(1) $\Rightarrow$ (2) By Lemma \ref{lem:3-1}, $(K(X), T_K)$ is weakly
mixing. Let $n \in \mathbb{N}$ and $U_1, \ldots, U_n$
be non-empty  open subsets of $X$. Since $(X, T)$ has dense distal
sets, there exist distal points $C_1, \ldots, C_n$ of $(K(X), T_K)$
such that $C_i \subset U_i$. Let $C=\bigcup_{i=1}^n C_i$. Clearly, $C \in \langle U_1, \ldots,
U_n\rangle$. We will show that $C$ is a distal point of $(K(X), T_K)$,
which implies $(K(X), T_K)$ has dense distal points.

Let $V_1, \ldots, V_m$ be non-empty open sets of $X$ with $C \in
\langle V_1, \ldots, V_m \rangle$.  Then for each $C_i$ there are
$s_1, \ldots, s_{m_i} \in \{1, \ldots, m\}$ such that $C_i \in
\langle V_{s_1}, \ldots, V_{s_{m_i}} \rangle$. Clearly, we have
$\bigcup_{i=1}^{n} \{V_{s_1}, \ldots, V_{s_{m_i}}\}=\{V_1, \ldots,
V_m\}$. Notice that each $C_i$ is distal and
$\mathcal{F}_{\mathrm{ip}}^*$ is a filter, it is not hard to verify
that $N(C, \langle V_1, \ldots, V_m \rangle)$ contains an IP$^*$-set
$\bigcap_{i=1}^n N(C_i, \langle V_{s_1}, \ldots, V_{s_{m_i}}
\rangle)$, which implies that $C$ is a distal point of $(K(X),
T_K)$.

\medskip

(2) $\Rightarrow$ (3) is obvious.

\medskip

(3) $\Rightarrow$ (1) By Lemma \ref{lem:3-1}, $(X, T)$ is weakly
mixing. Now we show that $(X, T)$ has dense distal sets. Let $U$ be
a non-empty open subset of $X$. Since $(K(X), T_K)$ has dense distal
sets, there exists a distal point $\mathcal{A}$ of $(K(K(X)), T_K)$
such that $\mathcal{A} \subset \langle U \rangle$.
Let $C=\bigcup\{A: A \in \mathcal{A}\}$. Clearly, $C \subset U$.
Next, we will show that $C \in K(X)$. In fact, let $\{x_n\}_{n=1}^{\infty}$
be a sequence of $C$ and $x_n \rightarrow x$. Then for each $n \in \mathbb{N}$
there is $A_n \in \mathcal{A}$ such that $x_n \in A_n$. Since $\mathcal{A}$ is a
non-empty closed subset of $K(X)$ and $K(X)$ is a compact metric
space, without loss of generality, we may assume $A_n$ convergence
to $A \in \mathcal{A}$. By the definition of Hausdorff metric, we
have $x \in A \subset C$, which implies $C\in K(X)$. To complete the proof,
it remains to show that $C$ is a distal point of $(K(X), T_K)$.

Let $V_1, \ldots, V_m$ be non-empty open sets of $X$ with $C \in
\langle V_1, \ldots, V_m \rangle$.  Then there exist non-empty open
subsets $\mathcal{V}_1, \ldots, \mathcal{V}_{s}$ of $K(X)$ satisfies
that
\begin{itemize}
\item $\mathcal{V}_i=\left\langle V_{k_1^i}, \ldots, V_{k_{s(i)}^{i}}\right\rangle$ for each $i=1, \ldots, s$;

\item $\mathcal{A} \in \langle \mathcal{V}_1, \ldots, \mathcal{V}_s\rangle$;

\item $\bigcup_{i=1}^s \left\{V_{k_1^i}, \ldots, V_{k_{s(i)}^i}\right\}=\{V_1, \ldots, V_m\}$.
\end{itemize}
It follows that $N(\mathcal{A}, \langle \mathcal{V}_1, \ldots,
\mathcal{V}_s \rangle) \subset N(C, \langle V_1, \ldots, V_m
\rangle)$, and hence $C$ is IP$^*$-recurrent (see Section 2). That
is, $C$ is a distal point of $(K(X), T_K)$.
\end{proof}

In \cite{HY05} it was showed that each weakly mixing system with
dense regular  minimal points is disjoint from any minimal systems;
and in \cite{DSY12, O10}, the authors showed that each weakly mixing
system with dense distal points is disjoint from any minimal
systems. Now we improve these results by showing that each weakly
mixing system with a dense set of distal sets is disjoint from all
minimal systems. That is, we have

\begin{thm} \label{thm:5-5}
If $(X, T)$ is a weakly mixing t.d.s. and has dense distal sets, then $(X, T)$ is disjoint from all minimal systems.
\end{thm}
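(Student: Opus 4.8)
The plan is to deduce the statement from the corresponding fact on the hyperspace, thereby reducing everything to the already-known disjointness result for weakly mixing systems with dense distal \emph{points}. Concretely, I would chain together Proposition \ref{prop:5-4}, the cited theorem of \cite{DSY12, O10}, and part (1) of Theorem \ref{thm:5-2}.

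First, since $(X,T)$ is weakly mixing and has dense distal sets, I would invoke the equivalence (1) $\Leftrightarrow$ (2) of Proposition \ref{prop:5-4} to conclude that $(K(X), T_K)$ is a weakly mixing system with dense distal points. The substantive content here lives entirely inside Proposition \ref{prop:5-4}: one checks that a finite union $C=\bigcup_{i=1}^n C_i$ of distal sets $C_i$ is itself a distal point of $(K(X),T_K)$, using that $\mathcal{F}_{\mathrm{ip}}^*$ is a filter, so that the finite intersection of the return-time IP$^*$-sets $N(C_i,\langle\cdots\rangle)$ is again an IP$^*$-set, making $C$ an IP$^*$-recurrent, hence distal, point.

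Next, I would apply the known result of \cite{DSY12, O10} that every weakly mixing t.d.s. with dense distal points is disjoint from all minimal systems. Since $(K(X),T_K)$ is a genuine t.d.s. on the compact metric hyperspace (with $T_K$ continuous and surjective whenever $T$ is), this theorem applies verbatim and yields that $(K(X),T_K)$ is disjoint from all minimal systems. Finally, I would invoke part (1) of Theorem \ref{thm:5-2}: as $(K(X),T_K)$ is weakly mixing and disjoint from all minimal systems, the same conclusion transfers back to $(X,T)$, which is exactly the desired result.

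The proof of Theorem \ref{thm:5-5} is therefore a three-step composition of previously established facts, and I do not expect any genuine obstacle in assembling it; the real labor has already been absorbed into Proposition \ref{prop:5-4} and into the cited disjointness theorem. For contrast, a direct argument bypassing the hyperspace would have to manufacture, for an arbitrary joining with a minimal system, a point whose return times form an IP$^*$-set, and it is precisely the passage through $K(X)$—where individual distal sets become distal points and the filter property handles finite combinations automatically—that renders this step routine rather than the main difficulty.
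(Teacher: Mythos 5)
Your proposal is correct and is exactly the paper's own argument: the paper proves Theorem \ref{thm:5-5} by citing Theorem \ref{thm:5-2}, Proposition \ref{prop:5-4}, and the disjointness theorem of \cite{DSY12} (Theorem 7.14 there), which is precisely your three-step chain of Proposition \ref{prop:5-4} (1) $\Rightarrow$ (2), the cited result on dense distal points, and Theorem \ref{thm:5-2} (1). No differences to report.
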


\begin{proof}
It directly follows from Theorem \ref{thm:5-2}, Proposition \ref{prop:5-4} and \cite[Theorem 7.14]{DSY12}.
\end{proof}

We strongly believe that there is a t.d.s. which has dense distal
sets and does not have dense distal points.



\begin{thebibliography}{s2}

\bibitem{Ban97} J. Banks, Regular periodic decompositions for topologically transitive maps,
{\it Ergod. Th. {\rm \&} Dynam. Sys.}, {\bf 17} (1997), 505-529.

\bibitem{Ban05} J. Banks, Chaos for induced hyperspace maps, {\it Chaos, Solitons $\&$ Fractals}, {\bf 25} (3) (2005), 681--685.

\bibitem{BS75} W. Bauer and K. Sigmund, Topological dynamics of transformations induced
on the space of probability measures, {\it Monatsh. Math.}, {\bf 79}(1975), 81--92.

\bibitem{BF02} A. Blokh and A. Fieldsteel, Sets that force recurrence, {\it Proc. Amer. Math. Soc.}, {\bf 130} (12) (2002), 3571--3578.

\bibitem{DSY12} P. Dong, S. Shao and X. Ye,  Product recurrent properties, disjointness
and weak disjointness, {\it Israel J. of Math.}, {\bf 188}(2012), 463--507.

\bibitem{Fur67} H. Furstenberg, Disjointness in ergodic theory, minimal sets, and a problem in Diophantine
approximation, {\it Mathematical Systems Theory. An International Journal on Mathematical Computing Theory}, {\bf 1} (1967), 1--49.

\bibitem{Fur81} H. Furstenberg, {\it Recurrence in Ergodic Theory and Combinatorial Number Theory},
M. B. Porter Lectures, Princeton University Press, Princeton, N.J., 1981.


\bibitem{GW95} E. Glasner and B. Weiss, almost-factors of zero entropy systems,
{\it J. Amer. Math. Soc.}, {\bf 8} (1995), 665--686.

\bibitem{GKLOP09} J. Guirao, K. Kwietniak, M. Lampart, P. Oprocha and A. Peris, Chaos on hyperspaces, {\it Nonlinear Anal.}, {\bf 71} (2009), 1--8.

\bibitem{HLY12} W. Huang, H. Li and X. Ye, Family independence for topological and measurable dynamics,
 {\it Trans. Amer. Math. Soc.}, {\bf 364} (2012), 5209--5242.

\bibitem{HPY07} W. Huang, K. Park and X. Ye, Dynamical systems disjoint from all minimal systems
with zero entropy, {\it Bull. Soc. Math. France}, {\bf 135} (2007), 259--282.

\bibitem{HY05} W. Huang and X. Ye, Dynamical systems disjoint from any minimal system,
{\it Trans. Amer. Math. Soc.}, {\bf 357} (2) (2005) 669--694.

\bibitem{KL1} D. Kerr and H. Li, Dynamical entropy in Banach spaces, {\it Invent. Math.}, {\bf 162} (3) (2005), 649--686.

\bibitem{KL2} D. Kerr and H. Li, Independence in topological and $C^*$-dynamics, {\it Math. Ann.}, {\bf 338} (4) (2007), 869--926.

\bibitem{Kom84} M. Komuro, The pseudo orbit tracing properties on the space of probability measures, {\it Tokyo J. Math.}, {\bf 7} (2) (1984), 461--468.

\bibitem{Leh87} E. Lehrer, Topological mixing and uniquely ergodic systems, {\it Israel J. Math.}, {\bf 57} (1987), 239--255.

\bibitem{Li11} J. Li, Transitive points via Furstenberg family, {\it Topology Appl.}, {\bf 158} (16) (2011), 2221--2231.

\bibitem{Li13} J. Li, Equivalent conditions of Devaney chaos on the hyperspace, arXiv1304.2447.

\bibitem{LSY13} Z. Lian, S. Shao and X. Ye, Weakly mixing, proximal topological models for ergodic systems and
applications, preprint 2013.

\bibitem{Nad92} Sam B. Nadler, Jr., {\it Continuum Theory: An introduction}, Pure and Applied Mathematics, {\bf 158},
Marcel Dekker Inc., New York, 1992.

\bibitem{O10} P. Oprocha, Weak mixing and product recurrence, {\it Ann. Inst. Fourier (Grenoble)}, {\bf 60} (4) (2010), 1233--1257.

\bibitem{Pet70} K. Petersen, Disjointness and weak mixing of minimal sets, {\it Proc. Amer. Math. Soc.},
{\bf 24} (1970), 278--280.

\bibitem{Rom03} H. Rom$\acute{\mathrm{a}}$n-Flores, A note on transitivity in set-valued discrete systems, {\it Chaos Solitons Fractals}, {\bf 17} (1) (2003), 99--104.

\bibitem{Shao} S. Shao, Dynamical systems and families, PhD thesis
submitted to UTSC in 2003.

\bibitem{Wal82} P. Walters, {\it An introduction to ergodic theory}, Graduate Texts in Mathematics, Vol. {\bf 79}, Springer-Verlag, New York-Berlin, 1982.
\end{thebibliography}
\end{document}